\definecolor{dblue}{rgb}{0.09,0.32,0.44} 
\newtheorem {theorem}{Theorem}
\newtheorem {lemma}{Lemma}
\newtheorem {corollary}{Corollary}
\newtheorem {proposition}{Proposition}
\newtheorem* {theorem*}{Theorem}
\newtheorem* {lemma*}{Lemma}
\newtheorem* {corollary*}{Corollary}
\newtheorem* {proposition*}{Proposition}
\newtheorem* {definition*}{Definition}
\newtheorem* {conjecture*}{Conjecture}
\newtheorem* {question*}{Question}
\newtheorem* {theoremkv*} {Theorem KV}
\newtheorem* {corollarykv*} {Corollary KV}
\newtheorem* {theoremrsc1*} {Theorem RSC1}
\newtheorem* {theoremrsc2*} {Theorem RSC2}
\theoremstyle{remark}
\newtheorem* {remark*}{Remark}
\def \R {\mathbb R}
\def \Z {\mathbb Z}
\def\cE{\mathcal{E}}
\def\cF{\mathcal{F}}
\def\cG{\mathcal{G}}
\def\cH{\mathcal{H}}
\def\cL{\mathcal{L}}
\def\vareps{\varepsilon}
\newcommand{\probab}[1]{\ensuremath{\mathbf{P}\left(#1\right)}}
\newcommand{\expect}[1]{\ensuremath{\mathbf{E}\left(#1\right)}}
\newcommand{\probabom}[1]{\ensuremath{\mathbf{P}_{\omega}\left(#1\right)}}
\newcommand{\expectom}[1]{\ensuremath{\mathbf{E}_{\omega}\left(#1\right)}}
\newcommand{\condprobabom}[2]{\ensuremath{\mathbf{P}_{\omega}\left(#1\bigm|#2\right)}}
\newcommand{\condexpectom}[2]{\ensuremath{\mathbf{E}_{\omega}\left(#1\bigm|#2\right)}}
\newcommand{\ind}[1]{\ensuremath{\mathbbm{1}_{\{#1\}}}}
\def\clap#1{\hbox to 0pt{\hss#1\hss}}
\def\ordo{o}
\def\Ker{\mathrm{Ker}}
\def\Ran{\mathrm{Ran}}
\def\Dom{\mathrm{Dom}}
\def\un{\underline{n}}
\renewcommand{\d}{\mathrm d}
\newcommand{\abs}[1]{\ensuremath\left|{#1}\right|}
\newcommand{\norm}[1]{\ensuremath\left\|{#1}\right\|}
\def\wh{\widehat}
\begin{document}

\title{Quenched Central Limit Theorem for Random Walks in Doubly Stochastic Random Environment}

\author{
{\sc B\'alint T\'oth}
\\[8pt]
{University of Bristol, UK and R\'enyi Institute, Budapest, HU}
}

\maketitle

\begin{abstract}
\noindent
We prove the quenched version of the central limit theorem for the displacement of a random walk in doubly stochastic random environment, under the $H_{-1}$-condition, with slightly stronger,  $\cL^{2+\varepsilon}$ (rather than $\cL^2$) integrability condition on the stream tensor. On the way we extend Nash's moment bound to the non-reversible, divergence-free drift case.  

\medskip\noindent
{\sc MSC2010: 60F05, 60G99, 60K37}

\medskip\noindent
{\sc Key words and phrases:} random walk in random environment, quenched central limit theorem, Nash bounds.

\end{abstract}

\section{Introduction}

Let $(\Omega, \cF, \pi, \tau_z:z\in\Z^d)$ be a probability space with an ergodic $\Z^d$-action. Denote by  $\cE:=\{k\in\Z^d: |k|=1\}$ the set of possible steps of a nearest-neighbour walk on $\Z^d$. Let $p_k:\Omega\to[0,s^*]$, $k\in\cE$, be bounded measurable functions  ($s^*<\infty$ is their common upper bound). These will be the jump rates of the RWRE considered (see \eqref{the walk} below) and assume they are \emph{doubly stochastic}, 
\begin{align}
\label{bistoch}
\sum_{k\in\cE}p_k(\omega)
=
\sum_{k\in\cE}p_{-k}(\tau_k\omega).
\end{align}
The physical meaning of \eqref{bistoch} is, that the local drift field of the walk is \emph{divergence-free}, i.e. the stream field of an \emph{incompressible} flow in stationary regime. 

Given these,  define the continuous time nearest neighbour random walk $t\mapsto X(t)\in\Z^d$ as a Markov process on $\Z^d$, with  $X(0)=0$ and conditional jump rates
\begin{align}
\label{the walk}
\condprobabom{X(t+dt)= x+k}{X(t)=x} = p_k(\tau_x\omega) dt + \ordo(dt),
\end{align}
where the subscript $\omega$ denotes that the random walk $X(t)$ is a Markov process on $\Z^d$ \emph{conditionally}, with fixed $\omega\in\Omega$, sampled according to $\pi$. The continuous setup is for convenience only. Since the jump rates are bounded this is fully equivalent with a discrete time walk. 

We will use the notation $\probabom{\cdot}$ and $\expectom{\cdot}$  for \emph{quenched} probability and expectation. That is: probability and  expectation with respect to the distribution of the random walk $X(t)$, \emph{conditionally, with given fixed environment $\omega$}. The notation $\probab{\cdot}:=\int_\Omega\probabom{\cdot} {\d}\pi(\omega)$ and  $\expect{\cdot}:=\int_\Omega\expectom{\cdot} {\d}\pi(\omega)$ will be reserved for \emph{annealed} probability and expectation. That is: probability and  expectation with respect to the random walk trajectory $X(t)$ \emph{and} the environment $\omega$, sampled according to the distribution $\pi$. 

It is well known (and easy to check, see e.g.\ \cite{kozlov_85}) that due to double stochasticity \eqref{bistoch} the annealed set-up is stationary and ergodic in time: the process of the environment as seen from the position of the random walker
\begin{align}
\label{env proc}
\eta(t):=\tau_{X(t)}\omega
\end{align}
is a stationary and ergodic Markov process on $(\Omega, \pi)$ and consequently the random walk $t\mapsto X(t)$ will have stationary and ergodic annealed increments.

Next we define, for $k\in\cE$, $s_k:\Omega\to[0,s^*]$, $v_k:\Omega\to[-s^*,s^*]$, and $\psi,\varphi:\Omega\to\R^d$,
\begin{align}
\label{s v phi psi}
\begin{aligned}
&
s_k(\omega):=\frac{p_k(\omega)+p_{-k}(\tau_k\omega)}{2},
\qquad\qquad
&&
\psi(\omega):= \sum_{k\in\cE} k s_k(\omega), 
\\
&
v_k(\omega):=\frac{p_k(\omega)-p_{-k}(\tau_k\omega)}{2},
\qquad\qquad
&&
\varphi(\omega):= \sum_{k\in\cE} k v_k(\omega).
\end{aligned}
\end{align}
The local \emph{quenched} drift of the random walk is
\begin{align}
\notag
\condexpectom{dX(t)}{X(t)=x} 
= 
\left(\psi(\tau_x\omega)+\varphi(\tau_x\omega)\right) dt +\ordo(dt).
\end{align}
Note that from the definitions \eqref{s v phi psi} it follows that for $\pi$-almost all $\omega\in\Omega$
\begin{align}
\label{symmetries}
\begin{aligned}
&
s_k(\omega)-s_{-k}(\tau_k\omega)=0,
\qquad\qquad
&&
\psi_i(\omega)=s_{e_i}(\omega)-s_{e_i}(\tau_{-e_i}\omega), 
\\
&
v_k(\omega)+v_{-k}(\tau_k\omega)=0,
\qquad\qquad
&&
\varphi_i(\omega)=v_{e_i}(\omega)+v_{e_i}(\tau_{-e_i}\omega).
\end{aligned}
\end{align}
In addition, condition \eqref{bistoch} is equivalent to
\begin{align}
\label{divfree}
\sum_{k\in\cE}v_k(\omega)\equiv0, 
\qquad 
\pi\text{-a.s.}
\end{align}
Thus, $\left(v_k(\tau_x\omega)\right)_{k\in\cE, x\in\Z^d}$ is a stationary sourceless (or, di\-ver\-gence-free) flow on the lattice $\Z^d$. The \emph{physical interpretation} of the divergence-free condition \eqref{divfree} is that the walk \eqref{the walk} models the motion of a particle suspended in stationary, \emph{incompressible flow}, with thermal noise. 

In order that the walk $t\mapsto X(t)$ have \emph{zero annealed mean drift} we also assume that for all $k\in\cE$
\begin{align}
\label{nodrift}
\int_\Omega v_k(\omega) \,{\d}\pi(\omega)
=0.
\end{align}

Our next assumption is the \emph{strong ellipticity} condition for the symmetric part of the jump rates: there exists another positive constant  $s_*\in(0,s^*]$ such that for $\pi$-almost all $\omega\in\Omega$ and all $k\in\cE$
\begin{align}
\label{ellipt}
s_k(\omega)\ge s_*, 
\quad 
\pi\text{-a.s.}
\end{align}
Note that the ellipticity condition is imposed only on the symmetric part $s_k$ of the jump rates and not on the jump rates $p_k$. It may happen that 
$\pi( \{ \omega: p_k(\omega)=0 \} )>0$, as it is the case in some examples given in \cite{kozma-toth_17}. 

By applying a linear time change we may and will choose $s_*=1\le s^*<\infty$.

Finally, we formulate the notorious \emph{$H_{-1}$-condition} which plays a key role. Denote for $i,j=1,\dots,d$, 
\begin{align}
\notag
&
C_{ij}(x)
:=
\int_\Omega \varphi_i(\omega)\varphi_j(\tau_x\omega)d\pi(\omega),
&
\wh C_{ij}(p)
:=
\sum_{x\in\Z^d} e^{\sqrt{-1}x\cdot p} C_{ij}(x).
\end{align}
By Bochner's theorem, the Fourier transform $\wh C$ is  positive definite $d\times d$-matrix-valued-measure on $[-\pi,\pi)^d$. The no-drift condition \eqref{nodrift} is equivalent to $\wh C_{ij}(\{0\})=0$, for all $i,j=1,\dots,d$. With slight abuse of notation we denote this measure formally as $\wh C_{ij}(p)dp$ even though it could be not absolutely continuous with respect to Lebesgue. 

The \emph{$H_{-1}$-condition} is the following: 
\begin{align}
\label{hcond}
\int_{[-\pi,\pi)^d}
\left(\sum_{j=1}^d(1-\cos p_j) \right)^{-1} \sum_{i=1}^d \wh C_{ii}(p) \, {\d}p <\infty.
\end{align}
This is an \emph{infrared bound} on the correlations of the skew-symmetric part of the drift field, $x\mapsto\varphi(\tau_x\omega)\in\R^d$. It implies diffusive upper bound on the annealed walk (see the upper bound in (KT28)) and turns out to be a natural sufficient condition for the diffusive scaling limit (that is, CLT for the annealed walk), see Theorem 1 in \cite{kozma-toth_17}. [Throughout this note (KTxx) points at display number (xx) in \cite{kozma-toth_17}.] Three other equivalent formulations of \eqref{hcond} are given in \cite{kozma-toth_17}. Two of these, (KT35) and Proposition 4(ii) of \cite{kozma-toth_17} are of particular interest, since we shall use them. Note that the $H_{-1}$-condition \eqref{hcond} actually formally implies the no-drift condition \eqref{nodrift}.

It is proved in Proposition 4 (ii) of \cite{kozma-toth_17} that the $H_{-1}$-condition \eqref{hcond} is equivalent to the existence of a stationary and square integrable stream-tensor-field whose curl (or, rotation) is exactly the source-less (divergence-free) flow $v$. More explicitly, there exist  $h_{k,l}\in\cH$, $k,l\in\cE$, with symmetries
\begin{align}
\label{htensor}
h_{k,l}(\omega)
=
-h_{-k,l}(\tau_k\omega)
=
-h_{k,-l}(\tau_l\omega)
=
-h_{l,k}(\omega)
\quad
\pi\text{-a.s},
\end{align}
such that 
\begin{align}
\label{vcurl}
v_k(\omega)=\sum_{l\in\cE}h_{k,l}(\omega).
\end{align}

\noindent{\bf Remarks on the stream tensor $h$.}
The fact that $v$ is expressed as in \eqref{vcurl} with $h$ having the symmetries \eqref{htensor} is essentially the lattice-version of Helmholtz's theorem (in its most common three-dimensional formulation: "a divergence free vector field is the curl of a vector field"). Note that \eqref{htensor} means that the stream tensor field $x\mapsto h(\tau_x\omega)$ is actually function of the \emph{oriented plaquettes} of $\Z^d$. In particular, in two-dimensions $x\mapsto h(\tau_x\omega)$ defines a stationary \emph{height function} on the dual lattice $\Z^{2}+ (1/2,1/2)$, in three-dimensions $x\mapsto h(\tau_x\omega)$ defines a stationary \emph{oriented flow} (that is: a vector field) on the dual lattice $\Z^{3}+ (1/2,1/2,1/2)$. For more details about the stream tensor and the derivation of \eqref{htensor}-\eqref{vcurl} see section 5 of \cite{kozma-toth_17}.

\smallskip

We will now assume that the stream-tensor-field has the stronger integrability 
\begin{align}
\label{two-plus-eps}
h\in\cL^{2+\vareps},
\end{align}
for some $\vareps>0$, rather than being merely square integrable. This stronger integrability condition is needed in the proof of quenched tightness of the diffusively scaled displacement $t^{-1/2}X(t)$. We denote 
\begin{align}
\label{hbound}
h^*
=
h^*(\vareps)
:=
\sum_{k,l\in\cE}\left(\int_\Omega \abs{h_{k,l}}^{2+\vareps} d\pi\right)^{1/(2+\vareps)}
<\infty.
\end{align} 

In \cite{kozma-toth_17} it was shown that for a RWRE \eqref{the walk} whose environment satisfies conditions \eqref{bistoch}, \eqref{ellipt} and \eqref{hcond} the central limit theorem holds, under diffusive scaling and Gaussian limit with finite and nondegenerate asymptotic covariance, \emph{in probability with respect to the environment}, see Theorem 1 in \cite{kozma-toth_17}. The proof is based on the \emph{relaxed sector condition} introduced in \cite{horvath-toth-veto_12} and down-to-earth explicit functional analysis in and over the Hilbert spaces of scalars ($\cH$) and gradients ($\cG$):
\begin{align}
\notag
\cH
&:=
\{ f\in\cL^2(\Omega, \pi): \int_{\Omega} f(\omega)d\pi(\omega)=0\}, 
\\
\notag
\cG
&:=
\{g=(g_k)_{k\in\cE}\in\oplus_{k\in\cE}\cH \ : \ 
\\
\notag
&
\hskip25mm
g_k(\omega)+g_{-k}(\tau_k\omega)=0, \ \
g_k(\omega)+g_l(\tau_k\omega)=g_l(\omega)+g_k(\tau_l\omega), \ \ 
k,l\in\cE\}.
\end{align} 

The main result of the present paper is, that under conditions \eqref{bistoch}, \eqref{ellipt}, \eqref{hcond} and the a marginally stronger integrability condition \eqref{two-plus-eps}version of \eqref{hcond} actually the \emph{quenched CLT} holds, with deterministic nondegenerate covariance matrix. This is Theorem \ref{thm: quenched clt} below.

For general background on RWRE and in particular on the quenched/annealed CLT dichotomy see the surveys \cite{zeitouni_04}, \cite{biskup_11}, \cite{kumagai_14}.  For more background on random walks in doubly stochastic random environment, for interesting examples and in general more illuminating comments  see \cite{kozma-toth_17}.

\section{Results}

Throughout the paper conditions \eqref{bistoch}, \eqref{ellipt} and \eqref{hcond} are assumed. (Recall that \eqref{nodrift} is formally implied by \eqref{hcond}, so we don't state it as a separate condition.) Propositions \ref{prop:harmonic_coordinates} and \ref{prop:martingale clt} are valid under these conditions. In Proposition \ref{prop:tightness}, and as a consequence, in Proposition \ref{prop:small error} and Theorem \ref{thm: quenched clt} the stronger integrability condition \eqref{two-plus-eps} of the stream-tensor-field  is also assumed. 

\begin{proposition}
\label{prop:tightness}
Conditions \eqref{bistoch}, \eqref{ellipt}, \eqref{hcond}, \eqref{two-plus-eps} are assumed. 
There exists a constant $M^*=M^*(d, s_*, s^*, \varepsilon, h^*)<\infty$ such that for $\pi$-almost all $\omega$ 
\begin{align}
\label{tightness}
\varlimsup_{t\to\infty} t^{-1/2} \expectom{\abs{X(t)}} \le M^*.
\end{align}
In particular the scaled displacements $t^{-1/2}X(t)$ are quenched tight. 
\end{proposition}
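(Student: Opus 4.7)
The target is a deterministic quenched second-moment bound $m_2(t) := \expectom{|X(t)|^2} \le M t$ with $M = M(d, s_*, s^*, \vareps, h^*)$; the $\cL^1$-estimate \eqref{tightness} then follows by Cauchy--Schwarz via $\expectom{|X(t)|}\le\sqrt{m_2(t)}$. This is the ``extended Nash moment bound'' advertised in the abstract: in the purely reversible case $\dot m_2 = O(1)$ is a one-line calculation, but the divergence-free antisymmetric drift $\varphi$ forces real work and is the reason for strengthening \eqref{hcond} to \eqref{two-plus-eps}.

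Writing $u_t^\omega(x):=\probabom{X(t)=x}$ and applying the generator to $f(x)=|x|^2$,
\[
\dot m_2(t) = 2\sum_x u_t^\omega(x)\, x\cdot(\psi+\varphi)(\tau_x\omega) + O(1),
\]
the $O(1)$ coming from $\sum_k p_k(\tau_x\omega)|k|^2 \le 2s^*d$. For the $\psi$-contribution, \eqref{symmetries} identifies $\psi$ as a discrete gradient, so summation by parts plus Cauchy--Schwarz bound it by $O(1) + C\sqrt{m_2(t)\,\cE_s(\sqrt{u_t^\omega},\sqrt{u_t^\omega})}$, with $\cE_s$ the symmetric Dirichlet form. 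For the $\varphi$-contribution, the stream-tensor representation \eqref{vcurl} combined with the antisymmetries \eqref{htensor} permits a discrete summation by parts: the diagonal pieces vanish because $h_{k,k}\equiv 0$ (antisymmetry), leaving a sum of schematic form $\sum_x x_i\, h_{e_i,e_j}(\tau_x\omega)\,\nabla_j u_t^\omega(x)$, which Cauchy--Schwarz controls by
\[
\Bigl(\sum_x u_t^\omega(x)\,|h(\tau_x\omega)|^2\,|x|^2\Bigr)^{1/2}\,\sqrt{\cE_s(\sqrt{u_t^\omega},\sqrt{u_t^\omega})}.
\]

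The core of the argument is to bound the first factor above. I would H\"older-split it with conjugate exponents $(2+\vareps)/2$ and $(2+\vareps)/\vareps$, yielding the estimate
\[
\bigl(\expectom{|h(\eta(t))|^{2+\vareps}}\bigr)^{1/(2+\vareps)}\,\cdot\,\bigl(\expectom{|X(t)|^{2(2+\vareps)/\vareps}}\bigr)^{\vareps/(2(2+\vareps))}.
\]
The first factor should be uniformly bounded in $t$ by a constant depending only on $h^*$, via the non-reversible Nash on-diagonal heat-kernel bound $\|u_t^\omega\|_\infty\le Ct^{-d/2}$---which holds uniformly in $\omega$ because the antisymmetric part of the generator does not contribute to $\frac{d}{dt}\|u_t^\omega\|_2^2$---combined with a maximal ergodic estimate for the stationary field $x\mapsto|h(\tau_x\omega)|^{2+\vareps}$; it is precisely here that \eqref{two-plus-eps} is essential, since such a maximal inequality with a \emph{deterministic} constant is available in $\cL^{2+\vareps}$ but fails in $\cL^2$. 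The higher-moment factor $\expectom{|X(t)|^{2(2+\vareps)/\vareps}}$ is then handled by a Moser-style bootstrap, starting from the trivial ballistic bound $m_2(t)\le(2s^*d)^2\,t^2$ and iteratively reducing the exponent $\alpha$ in an ansatz $m_q(t)\le C\,t^\alpha$ until the differential inequality closes with $m_2(t)\le Mt$. \textbf{The principal obstacle} is to execute this bootstrap keeping all constants fully deterministic---i.e.\ extracting the $\omega$-dependence from the ergodic-theorem step into an explicit $\omega$-independent bound---which is the technical heart of the extension of \cite{kozma-toth_17} to the quenched setting.
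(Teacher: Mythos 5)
Your strategy is genuinely different from the paper's, and while some ingredients are right (the on-diagonal heat-kernel bound carrying over to the doubly-stochastic case, the stream-tensor summation-by-parts), the overall scheme has gaps that the paper's argument is specifically designed to avoid.

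The paper does \emph{not} go through the second moment. It works directly with the first moment $M(t)=\expectom{|X(t)|}$ and the entropy $H(t)=-\sum_x q(t,x)\log q(t,x)$, following Nash's original blueprint: Lemma~\ref{lem:entropyineq} bounds $M(t)$ from below by $c_1e^{H(t)/d}$, Lemma~\ref{lem:nashbound} bounds $H(t)/d$ from below by $\tfrac12\log t-C_2$ (from the heat-kernel bound), and the genuinely new Lemma~\ref{lem:derivative bound} bounds $\dot M(t)$ from above using the stream-tensor representation and controls the resulting quantity by the entropy production $\dot H(t)$. The three differential relations close by themselves, with no higher-moment hierarchy, whereas your ansatz requires tracking $m_q(t)$ for $q=2(2+\vareps)/\vareps$ and closing a Moser iteration whose feasibility you only assert; the exponent blows up as $\vareps\to0$, so this route gets \emph{harder}, not softer, as the integrability improves toward $\cL^2$.

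The more serious gap is the one you flag yourself but do not resolve: the need for an $\omega$-independent constant. Your first H\"older factor is $\bigl(\expectom{|h(\eta(t))|^{2+\vareps}}\bigr)^{1/(2+\vareps)}$ \emph{at a fixed time $t$}, and there is no reason this is bounded by a deterministic constant; a maximal-ergodic bound yields a random constant $M(\omega)$, not a deterministic one, and the on-diagonal bound $q(t,x)\le Ct^{-d/2}$ alone does not convert a spatial ergodic average into a uniform pointwise-in-$t$ estimate. The paper circumvents this entirely by integrating in time first and applying Minkowski to $\int_0^t\dot M$, so that the corresponding factor becomes the \emph{Ces\`aro average} $\frac1t\int_0^t\expectom{(\cdots)(\eta(u))}\,du$. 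This converges $\pi$-a.s.\ to the deterministic value $C_7^{2+\vareps}=\sum_k\int_\Omega\bigl(s^*+\sum_l|h_{k,l}|\bigr)^{2+\vareps}d\pi$ by the Hopf--Chacon--Ornstein ergodic theorem for the environment process; that is exactly where the deterministic $h^*$ enters. Without this time-averaging step, a deterministic $M^*$ is not forthcoming. Finally, a smaller point: the reason the paper needs $\vareps>0$ is not a maximal-inequality dichotomy between $\cL^2$ and $\cL^{2+\vareps}$ (as you suggest), but the concluding Nash-style integration $\int_0^t\dot H(u)^{(2+\vareps)/(2+2\vareps)}du\le 3t^{\vareps/(2+2\vareps)}(\vareps^{-1}+G(t))$, whose right-hand side has the explicit $\vareps^{-1}$ blowup noted in the remark after Lemma~\ref{lem:derivative bound}; at $\vareps=0$ the exponent degenerates and the resulting bound picks up a $\log t$ factor from $H(t)$, which cannot be absorbed.
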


In the next proposition $\Delta$ denotes the Laplacian operator acting on the Hilbert space $\cH$, as defined in \eqref{grad-lap-riesz} below. Note that $\Delta$ is bounded, self-adjoint and negative. Thus, the operators $\abs{\Delta}^{1/2}$ and  $\abs{\Delta}^{-1/2}$ are defined in terms of the spectral theorem. The unbounded operator $\abs{\Delta}^{-1/2}$ is defined on the domain 
\begin{align}
\notag
\cH_{-1}:=
\{\phi\in\cH: 
\lim_{\lambda\searrow0}
(\phi, (\lambda I -\Delta)^{-1}\phi)_{\cH}<\infty\}
=
\Ran \abs{\Delta}^{1/2}
=
\Dom \abs{\Delta}^{-1/2}.
\end{align}

\begin{proposition}
\label{prop:harmonic_coordinates}
Conditions \eqref{bistoch}, \eqref{ellipt}, \eqref{hcond} are assumed. 
For any $\phi\in\cH_{-1}$
there exists a unique solution $\theta\in\cG$ of the equation
\begin{align}
\label{harm1}
&
\sum_{k\in\cE} p_k(\omega)\theta_k(\omega) = \phi(\omega).
\end{align}
\end{proposition}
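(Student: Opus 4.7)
The plan is to solve the equation by a resolvent regularization of the scalar Poisson equation $L\chi=\phi$, in the spirit of the Kipnis--Varadhan approach adapted to the present non-reversible, divergence-free drift setting, extracting the stationary gradient $\theta^{(\lambda)}:=\nabla\chi_\lambda\in\cG$ rather than the (possibly non-convergent) scalar solution $\chi_\lambda\in\cH$ itself. Here $L$ denotes the generator of the environment process \eqref{env proc}.

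First I would fix $\lambda>0$ and solve $(\lambda I-L)\chi_\lambda=\phi$ in $\cH$. Boundedness of $L$ together with the accretivity of $-L$ (namely $\langle f,-Lf\rangle_\cH=\langle f,-Sf\rangle_\cH\ge 0$ with $Sf:=\sum_k s_k(f\circ\tau_k-f)$ the self-adjoint symmetric part of $L$; the antisymmetric $v$-part contributes zero by the symmetries \eqref{symmetries}) render $\lambda I-L$ boundedly invertible. Testing with $\chi_\lambda$ gives
\[
\lambda\|\chi_\lambda\|_\cH^2+\langle\chi_\lambda,-S\chi_\lambda\rangle_\cH=\langle\chi_\lambda,\phi\rangle_\cH.
\]
Ellipticity $s_k\ge 1$ yields $\langle\chi_\lambda,-S\chi_\lambda\rangle_\cH\ge c\,\||\Delta|^{1/2}\chi_\lambda\|_\cH^2$ for some $c>0$, while the $H_{-1}$ property of $\phi$ gives $\langle\chi_\lambda,\phi\rangle_\cH\le\||\Delta|^{1/2}\chi_\lambda\|_\cH\cdot\||\Delta|^{-1/2}\phi\|_\cH$. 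Combining these produces the uniform-in-$\lambda$ bounds
\[
\||\Delta|^{1/2}\chi_\lambda\|_\cH\le C\,\||\Delta|^{-1/2}\phi\|_\cH,\qquad \lambda\|\chi_\lambda\|_\cH^2\le C\,\||\Delta|^{-1/2}\phi\|_\cH^2.
\]
In particular $\lambda\chi_\lambda\to 0$ strongly in $\cH$ as $\lambda\downarrow 0$, and the gradients $\theta^{(\lambda)}:=\nabla\chi_\lambda$ stay uniformly bounded in $\cG$ through the comparability $\|\nabla f\|_\cG\asymp\||\Delta|^{1/2}f\|_\cH$.

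By Hilbert-space weak compactness a subsequence $\theta^{(\lambda_n)}\rightharpoonup\theta\in\cG$. The linear map $\cG\ni\vartheta\mapsto\sum_k p_k\vartheta_k\in\cH$ is bounded (since $p_k\le s^*$), hence weakly continuous, so
\[
\sum_{k\in\cE}p_k\theta_k=\lim_n\sum_{k\in\cE}p_k\theta^{(\lambda_n)}_k=\lim_n L\chi_{\lambda_n}=\lim_n\bigl(\lambda_n\chi_{\lambda_n}-\phi\bigr)=-\phi,
\]
the last limit being strong in $\cH$. The field $-\theta\in\cG$ is the required solution.

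The hard part will be uniqueness. Suppose $\tilde\theta\in\cG$ with $\sum_k p_k\tilde\theta_k=0$. A key observation is that each component $\tilde\theta_k$ lies in $\cH$ and hence has zero mean, so the expected slopes $\E\tilde\theta_{e_i}$ all vanish. Using the curl-free condition in $\cG$, define the $\omega$-dependent corrector $\tilde\chi^\omega:\Z^d\to\R$ by $\tilde\chi^\omega(0):=0$ and $\tilde\chi^\omega(x+k)-\tilde\chi^\omega(x):=\tilde\theta_k(\tau_x\omega)$. The hypothesis $\sum_k p_k\tilde\theta_k=0$ translates into the pointwise $L^\omega$-harmonicity of $\tilde\chi^\omega$, while mean-zero stationarity of the cocycle $\tilde\theta$ yields sublinear growth $\tilde\chi^\omega(x)=o(|x|)$ for $\pi$-a.e.\ $\omega$. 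Combining sublinearity and harmonicity with the quenched diffusive heat-kernel control available under ellipticity and the $H_{-1}$ condition (via the Nash-type bounds developed later in the paper) then forces $\tilde\chi^\omega\equiv 0$ through a Liouville-type argument, whence $\tilde\theta=0$.
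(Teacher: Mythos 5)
Your existence argument is essentially correct, but it follows a genuinely different route from the paper's. You run a Kipnis--Varadhan resolvent regularization of $L\chi=\phi$, extract uniform $H_1$-bounds from ellipticity \eqref{ellipt} and the $H_{-1}$-property of $\phi$, and then take a weak subsequential limit of $\nabla\chi_\lambda$ in the (weakly closed) gradient space $\cG$. This uses only the coercivity of the symmetric part and the mere antisymmetry of $A$ in the energy identity, so it is in some ways softer than what the paper does. The paper instead writes $\theta=\Gamma\chi$ via the isometric isomorphism $\Gamma:\cH\to\cG$ and explicitly inverts the resulting scalar operator equation \eqref{eqforchi}; the decisive invertibility input there is the essential skew-self-adjointness of $C=S^{-1/2}AS^{-1/2}$ (Theorem RSC2 of \cite{kozma-toth_17}, the relaxed sector condition), which immediately gives $\norm{(I+C)^{-1}}\le 1$. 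The paper's route thus produces an explicit formula for $\theta$ and, more importantly, delivers uniqueness for free, whereas your resolvent argument only produces \emph{a} solution as a weak limit point.

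Your uniqueness argument, by contrast, has genuine gaps. First, the claim that mean-zero stationarity of the cocycle alone ``yields sublinear growth $\tilde\chi^\omega(x)=o(\abs{x})$ for $\pi$-a.e.\ $\omega$'' is false in the strength you need: Birkhoff gives sublinearity only along fixed lattice directions, and the paper's own Lemma~\ref{lem:lln} delivers only the averaged statement $N^{-(d+1)}\sum_{\abs{x}\le N}\abs{\Psi(x)}\to 0$, not pointwise everywhere-sublinearity. Pointwise sublinearity of the corrector is precisely the hard technical core of the quenched CLT literature and cannot simply be asserted. Second, even granting pointwise sublinearity, the Liouville-type conclusion you invoke for $L^\omega$-harmonic functions is not available from what the paper establishes: the Nash-type arguments in the proof of Proposition~\ref{prop:tightness} yield only the diagonal heat-kernel upper bound \eqref{nassh} and the first-moment control \eqref{tightness}, not off-diagonal estimates, oscillation bounds, or a parabolic Harnack inequality, and such results are delicate for the non-reversible, divergence-free-drift generator $L^\omega$. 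So your uniqueness step replaces a clean functional-analytic invertibility argument with a probabilistic Liouville heuristic whose two ingredients are both unproven here. To repair this you would either need to import the sector-condition machinery (which makes the detour pointless) or develop from scratch the missing sublinearity and Harnack/Liouville theory for these non-reversible walks.
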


\noindent
We denote by $\Theta:\Omega\times\Z^d\to\R$ the \emph{cocycle}  to which $\theta$ is the gradient: for $x\in\Z^d$ and $k\in\cE$
\begin{align}
\label{grad2}
\Theta(\omega, 0)=0, 
\hskip1cm
\Theta(\omega,x+k)-\Theta(\omega, x)=\theta_k(\tau_x\omega), 
\quad
\pi\text{-a.s.}.
\end{align}
Equations \eqref{harm1} and \eqref{grad2} amount to the fact that for all $x\in\Z^d$
\begin{align}
\notag
\phi(\tau_x\omega) - \sum_{k\in\cE}p_k(\tau_x\omega)\left(\Theta(\omega,x+k) - \Theta(\omega,x)\right)=0, 
\quad
\pi\text{-a.s.}
\end{align}
Hence, it follows that for $\pi$-a.a. $\omega\in\Omega$ fixed, the process 
\begin{align}
\label{martingale}
t\mapsto Y(t):= \int_0^t \phi(\tau_{X(s)}\omega)ds-\Theta(\omega, X(t))
\end{align}
is a martingale in the quenched filtration
\begin{align}
\notag
\cF_t:=\cF \vee \sigma\{X(s): 0\le s <t\}. 
\end{align}
That is: with $\omega\in\Omega$ fixed.

Due to the martingale central limit theorem and stationarity and ergodicity of the environment process $t\mapsto \eta(t)$ defined in \eqref{env proc} (see section 1.2 of \cite{kozma-toth_17}), the $\pi$-a.s. (quenched) central limit theorem follows for the process $t\mapsto Y(t)$. 

\begin{proposition}
\label{prop:martingale clt}
Conditions \eqref{bistoch}, \eqref{ellipt}, \eqref{hcond} are assumed. 
Let $\phi\in\cH_{-1}$.
For $\pi$-a.a. $\omega\in\Omega$, and any bounded and continuous function $f:\R\to\R$,
\begin{align}
\notag
\lim_{t\to\infty}
\expectom{f(t^{-1/2}Y(t))}
=
\frac{1}{\sqrt{2\pi} \bar\sigma}
\int_{-\infty}^\infty e^{-y^2/(2\bar\sigma^2)} f(y) dy,
\end{align}
with variance 
\begin{align}
\label{covariance}
\bar\sigma^2
:=
\sum_{k\in\cE}
\int_\Omega  s_k(\omega) \theta_k(\omega)^2d\pi(\omega)
>0.
\end{align}
\end{proposition}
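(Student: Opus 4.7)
By \eqref{harm1} and \eqref{grad2}, for $\pi$-a.a.\ fixed $\omega$ the compensator of $-\Theta(\omega,X(t))$ in the quenched filtration equals $-\int_0^t\phi(\eta(s))ds$, so $Y(t)$ is a pure-jump $\cF_t$-martingale under $\probabom{\cdot}$. Its predictable quadratic variation is
\begin{align}
\notag
\langle Y\rangle_t
=
\int_0^t V(\eta(s))\,ds,
\qquad
V(\omega):=\sum_{k\in\cE}p_k(\omega)\theta_k(\omega)^2.
\end{align}
The strategy is to feed this into a martingale CLT for pure-jump processes (Rebolledo / Jacod--Shiryaev VIII.3.11) applied under $\probabom{\cdot}$. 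Two inputs are required for $\pi$-a.a.\ $\omega$: (A) $t^{-1}\langle Y\rangle_t\to\bar\sigma^2$ in $\probabom{\cdot}$-probability, and (B) the Lindeberg compensator
\begin{align}
\notag
L_\varepsilon(t)
:=
\frac{1}{t}\int_0^t \sum_{k\in\cE}p_k(\eta(s))\theta_k(\eta(s))^2 \ind{|\theta_k(\eta(s))|>\varepsilon\sqrt{t}}\,ds
\end{align}
tends to $0$ in $\probabom{\cdot}$-probability for every $\varepsilon>0$.

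The main workhorse is the pointwise ergodic theorem for the environment process $\eta(t)=\tau_{X(t)}\omega$, which under $\probab{\cdot}$ is stationary and ergodic by double stochasticity (this is exactly the content of section 1.2 of \cite{kozma-toth_17}). Since $\theta\in\cG\subset\cL^2(\pi)$ and $p_k\le s^*$, one has $V\in\cL^1(\pi)$, so Birkhoff yields $t^{-1}\langle Y\rangle_t\to\int V\,d\pi$, $\probab{\cdot}$-a.s., and hence by Fubini also $\probabom{\cdot}$-a.s.\ for $\pi$-a.a.\ $\omega$. To identify the limit as $\bar\sigma^2$, split $p_k=s_k+v_k$: the $s_k$ part is manifestly $\bar\sigma^2$, while the $v_k$ part vanishes after summation. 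Indeed, using the shift-invariance of $\pi$ combined with $v_k(\omega)=-v_{-k}(\tau_k\omega)$ from \eqref{symmetries} and $\theta_k(\tau_{-k}\omega)=-\theta_{-k}(\omega)$ (from the gradient relation in $\cG$), one obtains $\int v_k\theta_k^2\,d\pi=-\int v_{-k}\theta_{-k}^2\,d\pi$, so summing over $k\in\cE$ gives zero. This proves (A).

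For (B), set $f_\delta(\omega):=\sum_{k\in\cE}p_k(\omega)\theta_k(\omega)^2\ind{|\theta_k(\omega)|>\delta}$. Then $0\le f_\delta\le V$ pointwise and $f_\delta\downarrow 0$ pointwise as $\delta\to\infty$, so $\int f_\delta\,d\pi\downarrow 0$ by dominated convergence. For each fixed $\delta>0$ and all $t\ge(\delta/\varepsilon)^2$ we have the deterministic bound $L_\varepsilon(t)\le t^{-1}\int_0^t f_\delta(\eta(s))\,ds$, whose right-hand side converges, by Birkhoff, to $\int f_\delta\,d\pi$ for $\pi$-a.a.\ $\omega$ and $\probabom{\cdot}$-a.s. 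Sending first $t\to\infty$ and then $\delta\to\infty$ (both along countable sequences, so the null sets remain under control) gives $L_\varepsilon(t)\to 0$, $\probabom{\cdot}$-a.s., for $\pi$-a.a.\ $\omega$. The Lindeberg sum itself is dominated by its compensator via Lenglart's inequality, hence it also tends to $0$ in $\probabom{\cdot}$-probability. Finally, positivity $\bar\sigma^2>0$ is immediate from the ellipticity $s_k\ge 1$ and the fact that $\theta\ne 0$ in $\cG$ whenever $\phi\ne 0$ in $\cH_{-1}$, since then $\bar\sigma^2\ge\sum_k\norm{\theta_k}_{\cH}^2>0$.

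\textbf{Main obstacle.} The only delicate point is the quenched upgrade of the Lindeberg bound, where the truncation threshold $\delta=\varepsilon\sqrt{t}$ depends on $t$; the monotone domination $f_{\varepsilon\sqrt{t}}\le f_\delta$ for large $t$ decouples the two limits and reduces the task to a countable application of Birkhoff, which is standard. The algebraic cancellation $\sum_k\int v_k\theta_k^2\,d\pi=0$ is the other small computation to perform carefully, but it is a direct consequence of the antisymmetry of $v$ and $\theta$ encoded in \eqref{symmetries} and in the definition of $\cG$.
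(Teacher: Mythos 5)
Your proof is correct and follows essentially the same route as the paper: identify $Y$ as a quenched martingale whose predictable quadratic variation is $\int_0^t\sum_{k\in\cE} p_k(\eta(s))\theta_k(\eta(s))^2\,ds$, apply Birkhoff's ergodic theorem for the stationary ergodic environment process (upgraded to $\pi$-a.a.\ $\omega$ by Fubini) to get the a.s.\ LLN for the quadratic variation, reduce $\sum_{k}\int p_k\theta_k^2\,d\pi$ to $\sum_{k}\int s_k\theta_k^2\,d\pi$ via the antisymmetries of $v$ and $\theta$, and deduce positivity from ellipticity. The only place where you supply more detail than the paper is the explicit $f_\delta$-truncation argument for the quenched Lindeberg condition, which the paper compresses into ``the most conventional application of the martingale CLT'' with references to Hall--Heyde and Helland.
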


As a corollary of Proposition \ref{prop:martingale clt} we get the quenched CLT for the \emph{harmonic coordinates} (that is, the appropriately corrected displacement) of the random walker. Indeed, first note that due to (the first line in)\eqref{symmetries} $\psi\in(\cH_{-1})^d$ holds a priori, and due to the $H_{-1}$-condition \eqref{hcond} $\varphi\in(\cH_{-1})^d$. Actually this latter fact is one of the equivalent forms of the $H_{-1}$-condition, see (KT35). Hence the term.  Therefore we can choose
\begin{align}
\notag
\phi
=
\phi^*
:=
\varphi+\psi\in (\cH_{-1})^d.
\end{align}
and solve (coordinate-wise) equation \eqref{harm1} with $\phi^*_i$, $i=1,\dots, d$, on the right hand side. We denote the solution $\theta^*\in\cG^d$ and  define the $\R^d$-valued cocycle $\Theta^*$ by \eqref{grad2}, with $\theta^*$ as gradient. Now, let 
\begin{align}
\notag
Y^*(t):=X(t) - \Theta^*(\omega, X(t)). 
\end{align}

\begin{corollary}
\label{cor:martingale clt}
Conditions \eqref{bistoch}, \eqref{ellipt}, \eqref{hcond} are assumed. 
For $\pi$-a.a. $\omega\in\Omega$, and any bounded and continuous function $f:\R^d\to\R$,
\begin{align}
\notag
\lim_{t\to\infty}
\expectom{f(t^{-1/2}Y^*(t))}
=
(2\pi \det \bar\sigma^2)^{-d/2}
\int_{\R^d} e^{-\frac12 y\cdot \bar\sigma^{-2} y} f(y) dy,
\end{align}
with nondegenerate covariance matrix 
\begin{align}
\label{displ covariance}
\bar\sigma^2_{ij}
:=
\sum_{k\in\cE}
\int_\Omega  s_k(\omega) (\theta_k(\omega)-k)_i(\theta_k(\omega)-k)_j d\pi(\omega).
\end{align}
\end{corollary}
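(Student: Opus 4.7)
My plan is to run the martingale--CLT argument of Proposition~\ref{prop:martingale clt} in vector form, using the Cram\'er--Wold device to reduce to the scalar case already handled. First I verify that each component of $Y^*(t) = X(t)-\Theta^*(\omega,X(t))$ is a quenched martingale: at a jump $x\to x+k$ the increment of $Y^*$ equals $k-\theta^*_k(\tau_x\omega)$, whose quenched local rate is
$$\sum_{k\in\cE} p_k(\tau_x\omega)\bigl(k-\theta^*_k(\tau_x\omega)\bigr) = \phi^*(\tau_x\omega) - \phi^*(\tau_x\omega) = 0,$$
since $\phi^* = \sum_k k\,p_k$ and $\theta^*$ solves \eqref{harm1} coordinate-wise with $\phi=\phi^*$.

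Fix $\alpha\in\R^d$. For the scalar quenched martingale $t\mapsto\alpha\cdot Y^*(t)$ the predictable quadratic variation is
$$\bigl\langle \alpha\cdot Y^*\bigr\rangle_t = \int_0^t \sum_{k\in\cE} p_k(\eta(s))\,\bigl(\alpha\cdot(k-\theta^*_k(\eta(s)))\bigr)^2\, ds,$$
where $\eta(s) = \tau_{X(s)}\omega$. By ergodicity of the environment process (section 1.2 of \cite{kozma-toth_17}) and Birkhoff's ergodic theorem, $t^{-1}\bigl\langle\alpha\cdot Y^*\bigr\rangle_t$ converges $\probabom{\cdot}$-a.s., for $\pi$-a.a.\ $\omega$, to the $\pi$-integral of the integrand. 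The same symmetrization that produces \eqref{covariance} (substitute $k\to -k$, use the $\tau_k$-invariance of $\pi$ and the antisymmetry $\theta^*_k(\omega) = -\theta^*_{-k}(\tau_k\omega)$) rewrites this limit as
$$\sum_{k\in\cE}\int_\Omega s_k(\omega)\bigl(\alpha\cdot(k-\theta^*_k(\omega))\bigr)^2\, d\pi(\omega) = \alpha\cdot\bar\sigma^2\alpha.$$

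The Lindeberg condition of the martingale CLT in its predictable continuous-time form reads
$$t^{-1}\int_0^t\sum_k p_k(\eta(s))\bigl|\alpha\cdot(k-\theta^*_k(\eta(s)))\bigr|^2 \mathbf 1\bigl\{\bigl|\alpha\cdot(k-\theta^*_k(\eta(s)))\bigr|>\delta\sqrt{t}\bigr\}\, ds \longrightarrow 0.$$
For any fixed cutoff $N>0$, Birkhoff converts the time-average of the $N$-truncated integrand into the corresponding $\pi$-integral, which tends to $0$ as $N\to\infty$ by dominated convergence and the $\cL^2$-integrability of $\theta^*$; since $\delta\sqrt t > N$ eventually, this settles Lindeberg. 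The martingale CLT then gives the quenched Gaussian limit for $\alpha\cdot Y^*(t)/\sqrt t$, and Cram\'er--Wold upgrades this to the vector statement. Nondegeneracy of $\bar\sigma^2$ follows from ellipticity \eqref{ellipt}: $\alpha\cdot\bar\sigma^2\alpha = 0$ would force $\alpha\cdot\theta^*_k(\omega) = \alpha\cdot k$ $\pi$-a.s.\ for each $k\in\cE$, but the left-hand side lies in $\cH$ and has zero $\pi$-mean while the right-hand side is a constant, so $\alpha\cdot k = 0$ for all $k\in\cE$, hence $\alpha = 0$.

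The only mildly delicate point is the Lindeberg verification, where the jumps $k-\theta^*_k$ are not uniformly bounded --- they involve the $\cL^2$ gradient $\theta^*$ --- but the truncation-plus-ergodicity argument above handles this cleanly without any pointwise control; everything else is routine bookkeeping built on Proposition~\ref{prop:martingale clt}.
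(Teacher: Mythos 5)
Your proposal is correct and follows essentially the same route as the paper: both rest on the observation that $Y^*(t)$ is a quenched martingale to which the martingale CLT applies, with the covariance obtained by ergodicity of the environment process and the symmetrization $v_k(\omega)=-v_{-k}(\tau_k\omega)$, $\theta_k(\omega)=-\theta_{-k}(\tau_k\omega)$. The paper arrives at the martingale property via the standard decomposition $Y^*(t)=\bigl(X(t)-\int_0^t\phi^*(\eta(s))\,ds\bigr)+\bigl(\int_0^t\phi^*(\eta(s))\,ds-\Theta^*(\omega,X(t))\bigr)$ (the Dynkin martingale for the position plus the martingale from Proposition~\ref{prop:martingale clt}), while you verify the vanishing of the local drift rate directly from $\phi^*=\sum_k k\,p_k$ and equation \eqref{harm1}; these are the same fact expressed two ways. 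You are more explicit than the paper about Cram\'er--Wold and the Lindeberg verification under only $\cL^2$ integrability of $\theta^*$; that fill-in is correct (fixed-cutoff truncation, Birkhoff, then dominated convergence in the cutoff), and the paper takes it as implicit. The nondegeneracy argument via zero-mean vs.\ constant is also the standard one and is correct.
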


The quenched CLT with the correcting terms $\Theta(X(t))$ removed will follow from Proposition \ref{prop:martingale clt}/Corollary \ref{cor:martingale clt} and the following error estimate.

\begin{proposition}
\label{prop:small error}
Conditions \eqref{bistoch}, \eqref{ellipt}, \eqref{hcond}, \eqref{two-plus-eps} are assumed. 
Let $\Omega\times\Z^d\ni x\mapsto \Psi(\omega,x)\in\R$ be a square integrable zero-mean cocycle. For $\pi$-a.a. $\omega\in\Omega$ and any $\delta>0$,
\begin{align}
\label{key-error}
\lim_{t\to\infty}\probabom{\abs{\Psi( X(t))}>\delta \sqrt{t}}=0.
\end{align}
\end{proposition}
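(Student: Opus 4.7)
The plan is to combine three ingredients essentially already in the paper's toolbox: (i) the quenched tightness of $t^{-1/2}X(t)$ from Proposition \ref{prop:tightness}; (ii) a Nash-type quenched pointwise upper bound on the transition probabilities, $\sup_x \probabom{X(t)=x}\le C\,t^{-d/2}$ with $C=C(d,s_*,s^*)$ deterministic --- this is the extension of Nash's moment bound to the non-reversible divergence-free setting that the abstract advertises, and should be available here via strong ellipticity \eqref{ellipt} and bounded rates; and (iii) the spatial $L^2$-sublinearity of a zero-mean square-integrable cocycle on an ergodic $\Z^d$-action, namely $N^{-(d+2)}\sum_{|x|\le N}|\Psi(\omega,x)|^2\to 0$ $\pi$-a.s.

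Given these, the argument is short. Fix $\delta,\eta>0$; by Proposition \ref{prop:tightness} pick $R<\infty$ with $\probabom{|X(t)|>R\sqrt{t}}<\eta$ for all large $t$, $\pi$-a.s., and set $A_t:=\{x\in\Z^d:|x|\le R\sqrt t,\ |\Psi(\omega,x)|>\delta\sqrt t\}$; it suffices to show $\probabom{X(t)\in A_t}\to 0$. Writing $q_t(\omega,0,x):=\probabom{X(t)=x}$, Cauchy--Schwarz together with $\sum_x q_t(\omega,0,x)^2\le \sup_x q_t(\omega,0,x)\cdot\sum_x q_t(\omega,0,x)=\sup_x q_t(\omega,0,x)$ and the Nash bound gives
\[
\probabom{X(t)\in A_t}=\sum_{x\in A_t}q_t(\omega,0,x)\le|A_t|^{1/2}\sqrt{\sup_x q_t(\omega,0,x)}\le \sqrt{C}\,t^{-d/4}|A_t|^{1/2},
\]
while Markov's inequality inside $A_t$ yields $|A_t|\le\delta^{-2}t^{-1}\sum_{|x|\le R\sqrt t}|\Psi(\omega,x)|^2$. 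Substituting and setting $N=R\sqrt t$, the bound becomes a constant multiple of $\bigl(N^{-(d+2)}\sum_{|x|\le N}|\Psi(\omega,x)|^2\bigr)^{1/2}$, which vanishes as $t\to\infty$ by (iii); sending $\eta\downarrow 0$ then finishes.

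The hard part is ingredient (iii). Along any fixed coordinate ray, $\Psi(\omega,Ne_i)/N\to 0$ $\pi$-a.s. is immediate from the one-dimensional Birkhoff theorem applied to the zero-mean $L^2$ gradient component $\theta_{e_i}$. Lifting this to a spatial $L^2$-average over the whole ball $B_N$ requires, for each $x=(x_1,\dots,x_d)\in B_N$, writing $\Psi(\omega,x)$ as the telescoping sum of $\theta_{k(y)}(\tau_y\omega)$ along an axis-parallel path from $0$ to $x$, applying Cauchy--Schwarz to get $|\Psi(\omega,x)|^2\le|x|\sum_{y\text{ on path}}|\theta_{k(y)}(\tau_y\omega)|^2$, then swapping the order of summation and invoking the multidimensional Birkhoff theorem to replace the resulting ergodic averages of $\theta_k^2\in L^1(\pi)$ by their expectation, slab by slab. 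Carrying this bookkeeping out $\pi$-almost surely (not merely in $L^1(\pi)$) is the only genuine technical work in the proof; the other two ingredients are already in place, (i) by Proposition \ref{prop:tightness} and (ii) via the extended Nash estimate developed earlier in the paper.
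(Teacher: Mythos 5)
Your overall strategy follows the paper's: combine quenched tightness (Proposition~\ref{prop:tightness}), the Nash diagonal heat-kernel bound~\eqref{nassh}, and a spatial sublinearity estimate for the square-integrable zero-mean cocycle. Ingredients (i) and (ii) are indeed available and are exactly what the paper uses. Your Cauchy--Schwarz/Markov manipulation of $\probabom{X(t)\in A_t}$ is sound as a piece of algebra, but it is a roundabout variant of the paper's direct computation in~\eqref{tbc}, and it makes you need the $L^2$ form of spatial sublinearity, $N^{-(d+2)}\sum_{|x|\le N}|\Psi(\omega,x)|^2\to 0$ a.s., which is formally stronger than the $L^1$ form $N^{-(d+1)}\sum_{|x|\le N}|\Psi(\omega,x)|\to 0$ that the paper's Lemma~\ref{lem:lln} provides and that already suffices (the $L^2$ statement implies the $L^1$ one by Cauchy--Schwarz, not conversely).

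The genuine gap is in your proof sketch of ingredient (iii). Writing $\Psi(\omega,x)$ telescopically along an axis-parallel path and using Cauchy--Schwarz gives $|\Psi(\omega,x)|^2\le |x|_1\sum_{y\text{ on path}}|\theta_{k(y)}(\tau_y\omega)|^2$. Summing over $x\in[0,N]^d$ and swapping the order of summation, a site on the $i$-th leg lies in the slab $[0,N]^i\times\{0\}^{d-i}$ and is visited by $\asymp N^{d-i+1}$ paths, so the $i$-th leg contributes $\asymp N\cdot N^{d-i+1}\cdot N^{i}\cdot(\text{slab average of }\theta_{e_i}^2)$, i.e.\ $\asymp N^{d+2}$ times an a.s.\ finite quantity. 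After normalising by $N^{-(d+2)}$ you get $O(1)$, not $o(1)$: there is no mechanism in this one-scale computation forcing the bound to vanish. Moreover the ``slab by slab'' use of Birkhoff is not justified for $i<d$: ergodicity of the full $\Z^d$-action does not imply ergodicity of the restricted $\Z^i$-sub-action on a lower-dimensional affine sublattice, so those slab averages converge to conditional expectations (random $L^1$ limits), not to the mean. The statement you want in (iii) is in fact correct, but its proof requires the two-scale decomposition of the paper's Lemma~\ref{lem:lln}: telescope on a fixed mesh $L$, send $N\to\infty$ using the \emph{unrestricted} multidimensional ergodic theorem (so that only full-dimensional averages appear), and then send $L\to\infty$ using the mean ergodic theorem together with the observation that, by the cocycle identity and $\Z^d$-ergodicity, the conditional expectation of $\theta_{e_{d+1}}$ on the $\tau_{e_{d+1}}$-invariant $\sigma$-algebra vanishes. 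It is precisely the final $L\to\infty$ step (a gain of $L^{-1}$) that your single-scale Cauchy--Schwarz argument is missing.
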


Indeed, Propositions \ref{prop:martingale clt}/Corollary \ref{cor:martingale clt} and Proposition \ref{prop:small error} readily imply the main result of this paper.

\begin{theorem}
\label{thm: quenched clt}
Conditions \eqref{bistoch}, \eqref{ellipt}, \eqref{hcond}, \eqref{two-plus-eps} are assumed. 
For $\pi$-a.a. $\omega\in\Omega$ the following quenched CLTs hold.
\\
(i)
Let $\phi\in\cH_{-1}$.
For any bounded and continuous function $f:\R\to\R$,
\begin{align}
\notag
\lim_{t\to\infty}
\expectom{f(t^{-1/2}\int_0^t\phi(\eta(s)) ds)}
=
\frac{1}{2\pi \bar\sigma}
\int_{-\infty}^\infty e^{-y^2/(2\bar\sigma^2)} f(y) dy,
\end{align}
with the variance $\bar\sigma^2$ given in \eqref{covariance}. 
\\
(ii)
For any bounded continuous function $f:\R^d\to\R$, 
\begin{align}
\notag
\lim_{t\to\infty}
\expectom{f(t^{-1/2}X(t))}
=
(2\pi \det \bar\sigma^2)^{-d/2} ()^{-1}\int_{\R^d} e^{-\frac{y\cdot \bar\sigma^{-2} y}{2}} f(y) dy,
\end{align}
with the non-degenerate covariance matrix $\bar\sigma^2$ given in \eqref{displ covariance}.

\end{theorem}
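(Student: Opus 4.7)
The plan is to deduce the theorem directly from Propositions \ref{prop:martingale clt}, \ref{prop:small error} and Corollary \ref{cor:martingale clt} by a Slutsky-type argument, quenched in $\omega$.

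For part (i), given $\phi\in\cH_{-1}$, Proposition \ref{prop:harmonic_coordinates} produces $\theta\in\cG$ solving \eqref{harm1}, and hence a cocycle $\Theta$ via \eqref{grad2}. The definition \eqref{martingale} of the martingale $Y(t)$ then reads
\begin{align}
\notag
\int_0^t\phi(\eta(s))\,{\d}s
= Y(t) + \Theta(\omega, X(t)).
\end{align}
First I would check that $\Theta$ is a square-integrable zero-mean cocycle in the sense required by Proposition \ref{prop:small error}: the cocycle property is \eqref{grad2} itself, zero-mean and square integrability of $\Theta(\cdot, x)$ for each fixed $x\in\Z^d$ follow from $\theta_k \in \cH\subset\cL^2(\Omega,\pi)$ together with a telescoping along any lattice path from $0$ to $x$. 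Proposition \ref{prop:small error} then yields
\begin{align}
\notag
t^{-1/2}\Theta(\omega, X(t))\toprob 0
\end{align}
for $\pi$-a.a.\ $\omega$, while Proposition \ref{prop:martingale clt} says that $t^{-1/2}Y(t)$ converges in quenched distribution to $\cN(0,\bar\sigma^2)$. Slutsky's theorem combines the two, giving the announced limit for $t^{-1/2}\int_0^t\phi(\eta(s))\,{\d}s$.

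For part (ii) the argument is identical, carried out coordinate-wise in $\R^d$. Writing
\begin{align}
\notag
X(t)=Y^*(t)+\Theta^*(\omega, X(t)),
\end{align}
each component of $\theta^*\in\cG^d$ obtained from $\phi^*=\varphi+\psi\in(\cH_{-1})^d$ yields a square-integrable zero-mean cocycle $\Theta^*_i$, to which Proposition \ref{prop:small error} applies, forcing $t^{-1/2}\Theta^*(\omega, X(t))\toprob 0$ componentwise (hence in $\R^d$). Corollary \ref{cor:martingale clt} provides the multivariate quenched CLT for $t^{-1/2}Y^*(t)$ with the covariance matrix \eqref{displ covariance}. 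A $d$-dimensional Slutsky argument, applied to each bounded continuous $f:\R^d\to\R$, completes the proof.

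There is essentially no obstacle beyond bookkeeping here, since the substantive work is done in the three preceding results: the martingale decomposition and quenched martingale CLT under the $H_{-1}$-condition alone, and the quenched sublinearity of square-integrable zero-mean cocycles along the walk, which is the place where the strengthened integrability \eqref{two-plus-eps} of the stream tensor enters (via the tightness estimate \eqref{tightness} and the Nash-type moment bound developed for the non-reversible, divergence-free drift case). The only point requiring care is the verification that the correctors $\Theta$ and $\Theta^*$ really are admissible inputs to Proposition \ref{prop:small error}, which, as noted above, is immediate from $\theta,\theta^*\in\cG$ (respectively $\cG^d$) and the definition of the cocycle in \eqref{grad2}.
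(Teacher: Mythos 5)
Your argument is correct and is exactly the route the paper intends (the paper merely states that Propositions \ref{prop:martingale clt}/\ref{prop:small error} and Corollary \ref{cor:martingale clt} "readily imply" the theorem); you have simply spelled out the Slutsky step and the routine verification that $\Theta$, $\Theta^*$ are square-integrable zero-mean cocycles admissible in Proposition \ref{prop:small error}.
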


\medskip
\noindent
{\bf Remarks:} 
\begin{enumerate}[$\circ$]

\item
Theorem \ref{thm: quenched clt} readily extends to all finite dimensional marginals of the diffusively scaled \emph{process} $t\mapsto T^{-1/2} X(Tt)$, as $T\to\infty$. In order to spare notation and space we don't make explicit this straightforward extension. 

\item
The idea of \emph{harmonic coordinates} originates in the seminal paper \cite{kozlov_85}. However, as pointed out in later works (see e.g  \cite{sidoravicius-sznitman_04}, \cite{biskup_11} or \cite{komorowski-landim-olla_12}) beside the highly innovative ideas some arguments of key importance are not fully complete there. 

\item
By restricting to $p_k(\omega)=p_{-k}(\tau_k\omega)$ (that is, $p_k=s_k$, $v_k\equiv0$), see \eqref{symmetries}, the case of random walks among bounded and elliptic random conductances is covered. This is Theorem 1 in \cite{sidoravicius-sznitman_04}. However, since the ellipticity condition \eqref{ellipt} is essential in our current setup,  Theorem 2.1 of \cite{sidoravicius-sznitman_04} and the main results of \cite{berger-biskup_07}, \cite{biskup-prescott_07}, \cite{anders-deuschel-slowik_15} are not covered as special cases. 

\item
Relaxing the ellipticity condition \eqref{ellipt} within this context remains open. This might be possibly resolved by combining ideas and techniques from \cite{anders-deuschel-slowik_15}, \cite{biskup-prescott_07} with those in this paper.

\end{enumerate}

\bigskip
\noindent
Before turning to the proofs we summarize what is truly new -- compared with earlier works on quenched CLT for RWRE -- in the details that follow. 

\begin{enumerate}[$\circ$]

\item
The proof of Proposition \ref{prop:tightness} relies on an extension of Nash's celebrated moment bound, cf.\cite{nash_58}, to non-reversible, divergence-free drift (i.e. incompressible flow) context. To our knowledge this is the first such kind of extension of Nash's arguments. 

\item
In the proof of Proposition \ref{prop:harmonic_coordinates}, 
the construction of the harmonic coordinates is done by functional analytic tools, relying on the the method of \emph{relaxed sector condition}, cf. \cite{horvath-toth-veto_12}, \cite{kozma-toth_14}, \cite{kozma-toth_17}, which differs essentially from the methods employed in the cited earlier works. 

\item
In the proof of Proposition \ref{prop:small error}, softer than usual, merely ergodic  arguments are employed in proving vanishing under diffusive scaling of the corrector term. 

\end{enumerate}

\section{Proofs}

\subsection{Tightness:  Proof of Proposition \ref{prop:tightness}}

We follow Nash's blueprint, cf. \cite{nash_58}. See also \cite{bass_02} for a streamlined version of the proof and \cite{barlow_04}, \cite{biskup-prescott_07} for adaptation of details to lattice walk on $\Z^d$ (rather than continuous diffusion on $\R^d$) setup. However, new elements are needed due to the non-reversible drift term. These new elements of the proof will be highlighted. 

The main ideas of \cite{nash_58} have been employed in  the context of random walks among random conductancies, cf. \cite{barlow_04}, \cite{biskup-prescott_07}. In all cited works, however, the diffusions and random walks considered have been \emph{reversible} with respect to uniform measure on $\R^d$, respectively, $\Z^d$. That is, the diffusion generators in \cite{nash_58} and \cite{bass_02} are in divergence form, the random walks in \cite{barlow_04} and \cite{biskup-prescott_07} are defined by conductancies of unoriented edges. It has been well known that the diagonal heat kernel upper bound, \eqref{nassh} below, follows  from Nash's inequality not only in the reversible but also in the doubly stochastic/divergence-free cases. The novelty in Proposition \ref{prop:tightness} and its forthcoming proof is the extension of the entropy and entropy-production bounds of \cite{nash_58} to the nonreversible case, with doubly stochastic (or, sourceless, divergence-free, incompressible) jump rates. In the diffusion setup this corresponds to a \emph{divergence-free} drift term added to the reversible infinitesimal generator. The main point is, that in this case and under the integrability condition \eqref{two-plus-eps} we are able to control the terms coming from the skew-self-adjoint parts, too, by the entropy production. This is by no means straightforward. Without assuming at least the $H_{-1}$-condition \eqref{hcond} this moment bound is simply not valid, even in the annealed setup. The stronger integrability condition imposed on the stream tensor may well be a technical nuisance only. 

All constants in the forthcoming estimates will depend on $d$, $s_*$, $s^*$, $\varepsilon$ and $h^*$ only. See \eqref{ellipt} and \eqref{hbound} for their definition. We will adopt the following notational convention: those constants where their being positive (but possibly small) is important will be denoted by lower case symbols $c_j$, whereas those ones where their being finite (but possibly large) is the point will be denoted by upper case symbols $C_j$. We will be explicit about which constants depend on which of the four parameters listed above. 

We are in the quenched setup. However, in order to lighten notation dependence on $\omega\in\Omega$  will not be shown explicitly within this proof. Denote 
\begin{align}
\notag
q(t,x)
=
q(t,x,\omega)
&:=
\probabom{X(t)=x}, 
\\
\notag
M(t)
=
M(t,\omega)
&:=
\expectom{\abs{X(t)}}
=
\sum_{x\in\Z^d} \abs{x}q(t,x)
\\
\notag
H(t)
=
H(t,\omega)
&:=
-\sum_{x\in\Z^d} \log q(t,x) q(t,x).
\end{align}

The ingredients of the proof of Proposition \ref{prop:tightness} are collected in lemmas \ref{lem:entropyineq}, \ref{lem:nashbound} and \ref{lem:derivative bound} below. For the proofs of lemmas \ref{lem:entropyineq} and \ref{lem:nashbound} we refer to earlier works. We present full proof of Lemma \ref{lem:derivative bound} which contains the new elements. 

\begin{lemma}
\label{lem:entropyineq}
There exists a constant $c_1=c_1(d)\in(0,\infty)$  such that for any $t>0$ it holds that if $M(t)>1$ then 
\begin{align}
\label{entropy}
M(t)\ge c_1 e^{\frac{H(t)}{d}}.
\end{align}
\end{lemma}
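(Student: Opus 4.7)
The plan is to invoke the Gibbs variational principle: among all probability measures on $\Z^d$ with prescribed first moment, the entropy is maximized by the exponential tilt $p_\lambda(x):=Z(\lambda)^{-1}e^{-\lambda|x|}$, where the partition function is $Z(\lambda):=\sum_{x\in\Z^d}e^{-\lambda|x|}<\infty$ for every $\lambda>0$. Nonnegativity of the Kullback--Leibler divergence of $q(t,\cdot)$ with respect to $p_\lambda$,
\[
0\le \sum_{x\in\Z^d} q(t,x)\log\frac{q(t,x)}{p_\lambda(x)}=-H(t)+\lambda M(t)+\log Z(\lambda),
\]
rearranges to the \emph{master inequality}
\[
H(t)\le \lambda M(t)+\log Z(\lambda),\qquad\lambda>0.
\]
The problem thus reduces to a standard bound on $\log Z(\lambda)$ followed by optimization in $\lambda$.

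Next, I would control $Z(\lambda)$ by comparison with a Gaussian integral. Using the norm equivalence $|x|\ge c_d|x|_\infty$ and counting lattice points with $|x|_\infty=k$ (there are $\le C_d k^{d-1}$ of them),
\[
Z(\lambda)\le 1+C_d\sum_{k\ge1}k^{d-1}e^{-c_d\lambda k}\le 1+C_d'\int_0^\infty r^{d-1}e^{-c_d\lambda r}\,{\d}r\le C_d''\bigl(1+\lambda^{-d}\bigr).
\]
Consequently $\log Z(\lambda)\le d\log(1/\lambda)+C_d'''$ for all $\lambda\in(0,d]$, with a purely dimensional constant $C_d'''$.

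Finally, make the natural choice $\lambda:=d/M(t)$. The hypothesis $M(t)>1$ guarantees $\lambda\in(0,d]$, so the partition-function estimate applies, and the master inequality becomes
\[
H(t)\le d+d\log M(t)-d\log d + C_d'''= d\log M(t)+ \tilde C_d,
\]
which is exactly $M(t)\ge c_1 e^{H(t)/d}$ with $c_1:=e^{-\tilde C_d/d}$. The only delicate point is the restriction of the $Z(\lambda)\lesssim \lambda^{-d}$ estimate to small $\lambda$: this is precisely the reason the hypothesis $M(t)>1$ is placed in the statement, since it forces $\lambda=d/M(t)$ into the regime where the $\lambda^{-d}$ behaviour of $Z$ dominates. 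Otherwise the argument is entirely classical and contains no obstacle of substance; the real work of the paper is deferred to the subsequent lemmas.
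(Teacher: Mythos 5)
Your proof is correct and is precisely the classical argument (nonnegativity of relative entropy against an exponential tilt, then optimization in $\lambda$) that the paper attributes to Carleson and cites from \cite{nash_58}, \cite{bass_02}, \cite{barlow_04}, \cite{biskup-prescott_07} without reproducing. The observation that the hypothesis $M(t)>1$ is exactly what keeps $\lambda=d/M(t)$ in the regime where $\log Z(\lambda)\le d\log(1/\lambda)+C_d$ holds is the right explanation for why that restriction appears in the statement.
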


\smallskip
\noindent
The bound \eqref{entropy} is direct consequence of the entropy inequality and it is actually valid for any probability distribution $q(x)$ on $\Z^d$. 
See \cite{nash_58}, \cite{bass_02} for a proof for absolutely continuous probability measures on $\R^d$ and \cite{barlow_04}, \cite{biskup-prescott_07} for its adaptation to probability measures on $\Z^d$. We do not reproduce here these details. It is interesting to note that in \cite{nash_58} Nash attributes this particular argument to Carleson.

\qed

\begin{lemma}
\label{lem:nashbound}
There exists a constant $C_2=C_2(d, s_*)\in(0,\infty)$ such that 
\begin{align}
\label{nassssh}
\frac{H(t)}{d} \ge \frac12\log t -C_2.
\end{align}
\end{lemma}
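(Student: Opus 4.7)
The plan is to derive \eqref{nassssh} as an immediate consequence of the quenched on-diagonal Nash heat-kernel upper bound
\[
\sup_{x\in\Z^d} q(t,x,\omega) \;\leq\; C\, t^{-d/2}, \qquad t\geq 1,
\]
valid for $\pi$-almost all $\omega\in\Omega$ with a constant $C=C(d,s_*)<\infty$ (this being presumably the statement \eqref{nassh} referred to in the introductory remarks). Once the sup-bound is in hand, for any $x$ in the support of $q(t,\cdot,\omega)$ one has $-\log q(t,x,\omega) \geq \tfrac{d}{2}\log t - \log C$, so averaging against the probability distribution $q(t,\cdot,\omega)$ yields
\[
H(t)\;=\;-\sum_{x\in\Z^d} q(t,x)\log q(t,x)\;\geq\;\tfrac{d}{2}\log t - \log C,
\]
which is \eqref{nassssh} with $C_2 = (\log C)/d$, absorbing the regime of small $t$ (where $H(t)\geq 0$ suffices) into the constant.

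Thus the whole content of the lemma is the heat-kernel sup-bound, and here the standard Nash scheme goes through unchanged in the non-reversible doubly stochastic setting. Double stochasticity \eqref{bistoch} kills the antisymmetric part of the generator $L$ in the $\ell^2(\Z^d)$ energy identity, leaving
\[
\tfrac{d}{dt}\|q(t,\cdot)\|_{\ell^2}^2 \;=\; -\sum_{x\in\Z^d}\sum_{k\in\cE} s_k(\tau_x\omega)\bigl(q(t,x+k)-q(t,x)\bigr)^2,
\]
which by ellipticity \eqref{ellipt} (recall $s_*=1$) dominates the standard unweighted discrete Dirichlet form. Nash's inequality on $\Z^d$ then closes the differential inequality $\tfrac{d}{dt}\|q(t,\cdot)\|_{\ell^2}^2\leq -c\,\|q(t,\cdot)\|_{\ell^2}^{2+4/d}$ (using $\|q(t,\cdot)\|_{\ell^1}=1$), which integrates to $\|q(t,\cdot)\|_{\ell^2}^2\leq C t^{-d/2}$. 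The passage from $\ell^2$ to $\ell^\infty$ is via the Chapman--Kolmogorov factorisation $q(t,0,x) = \sum_y q(t/2,0,y)\,q(t/2,y,x)$ and Cauchy--Schwarz, applied together with the same Nash estimate for the time-reversed walk; since double stochasticity makes the counting measure invariant, the reversal has jump rates $\hat p_k(\omega):=p_{-k}(\tau_k\omega)$, which are manifestly again doubly stochastic and satisfy the same ellipticity bound $\hat s_k = s_k\geq s_*$.

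The key point I want to stress is that all constants here depend only on $d$ and $s_*$ (not on $\omega$), because Nash's inequality is a deterministic statement about $\Z^d$ and \eqref{ellipt} holds on a full $\pi$-measure set. No use is made of the stream-tensor integrability \eqref{two-plus-eps} at this stage — indeed the main obstacle of the paper, namely controlling the skew (drift) part of $L$ in a quantitative way beyond the trivial cancellation in the energy identity, is concentrated in the subsequent Lemma \ref{lem:derivative bound} (the first-moment/entropy-production bound), not here. Lemma \ref{lem:nashbound} is the easy half of the Nash machinery and its proof is essentially a bookkeeping exercise chaining Nash's inequality, the resulting heat-kernel decay, and Jensen for the entropy.
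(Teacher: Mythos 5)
Your proposal is correct and follows essentially the same route as the paper: the paper likewise invokes the on-diagonal heat-kernel bound $q(t,x)\le Ct^{-d/2}$ (display \eqref{nassh}, attributed to Nash's inequality or, alternatively, to the evolving-sets method of Morris--Peres via Proposition 3 of \cite{kozma-toth_17}) and notes that \eqref{nassssh} then follows directly from the definition of $H(t)$. You simply fill in the (correct, standard) details of the Nash iteration and the $\ell^2\to\ell^\infty$ upgrade via the time-reversed walk, which the paper omits.
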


\smallskip
\noindent
From Nash's inequality it follows, that there exists a constant $C=C(d,s_*)$ such that for all $t\ge0$ and $x\in\Z^d$
\begin{align}
\label{nassh}
q(t,x)\le C t^{-d/2}.
\end{align}
See Proposition 3 in \cite{kozma-toth_17} for an alternative derivation using "evolving sets" method of \cite{morris-peres_05}. We omit the details. The bound \eqref{nassssh} follows directly from \eqref{nassh} and the definition of the entropy $H(t)$. 

\qed

Defining now
\begin{align}
\notag
G(t):=\frac{H(t)}{d} - \frac12\log t +C_2 \ge 0, 
\end{align}
the entropy bound \eqref{entropy} reads
\begin{align}
\label{entropy2}
t^{-1/2} M(t)\ge c_3 e^{G(t)},
\end{align}
with $c_3=c_3(d,s_*)=c_1 e^{-C_2}\in(0,\infty)$. 

\begin{lemma}
\label{lem:derivative bound}
There exists a constant $C_4=C_4(d,s_*, s^*, h^*)\in(0,\infty)$ so that for $\pi$-almost all $\omega\in\Omega$ there exists $t^*(\omega)<\infty$ such that for $t>t^*(\omega)$
\begin{align}
\label{derivative bound}
t^{-1/2} M(t) \le C_4 (G(t) + \vareps^{-1})^{\frac{1+\vareps}{2+\vareps}}, 
\end{align}
where $\vareps>0$ is from \eqref{two-plus-eps}. 
\end{lemma}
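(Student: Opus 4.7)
The proof follows Nash's blueprint with the crucial modification needed to handle the divergence-free drift. The strategy has three steps: differentiate $M(t)$, bound $\dot M(t)$ via Cauchy--Schwarz against an entropy-production-type quantity, then integrate back. Set $\phi_k(x):=|x+k|-|x|$.

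\textbf{Step 1 (derivative and symmetric part).} Using $\dot q=\cL^* q$ and the splitting $p_k=s_k+v_k$,
\begin{align*}
\dot M(t)=\sum_{x,k}\phi_k(x)s_k(\tau_x\omega)q(t,x)+\sum_{x,k}\phi_k(x)v_k(\tau_x\omega)q(t,x)=\dot M_s(t)+\dot M_v(t).
\end{align*}
For $\dot M_s$ the symmetry $s_k(\tau_x\omega)=s_{-k}(\tau_{x+k}\omega)$ allows a rewriting as $-\tfrac12\sum s_k\phi_k(q(x+k)-q(x))$; one applies Cauchy--Schwarz with the factorization $q(x+k)-q(x)=(\sqrt{q(x+k)}-\sqrt q(x))(\sqrt{q(x+k)}+\sqrt q(x))$ and the inequality $(a-b)(\log a-\log b)\ge 4(\sqrt a-\sqrt b)^2$ to obtain the classical Nash bound $|\dot M_s(t)|^2\le C\,\dot H_s(t)$, where $\dot H_s\ge 0$ denotes the symmetric part of the entropy production.

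\textbf{Step 2 (divergence-free part --- the new ingredient).} Substituting $v_k=\sum_l h_{k,l}$ and exploiting the symmetries \eqref{htensor} --- specifically $h_{k,l}(\tau_x\omega)=-h_{k,-l}(\tau_{x+l}\omega)$ --- a discrete integration by parts in the $l$ direction yields
\begin{align*}
-2\dot M_v(t)=\sum_{x,k,l}h_{k,l}(\tau_x\omega)\bigl\{\phi_k(x)(q(t,x+l)-q(t,x))+q(t,x+l)(\phi_k(x+l)-\phi_k(x))\bigr\}.
\end{align*}
Two Cauchy--Schwarz applications give
\begin{align*}
|\dot M_v(t)|\le C\sqrt{\dot H_s(t)\,\Phi(t)}+C\sqrt{\Phi(t)},\qquad \Phi(t):=\sum_{x}\|h(\tau_x\omega)\|_F^2\,q(t,x),
\end{align*}
where for the first term we use $s_l\ge 1$ together with the identity $\sum_x h_{k,l}^2(\tau_x\omega) q(t,x+l)=\sum_x h_{k,-l}^2(\tau_x\omega) q(t,x)$ (from the spatial antisymmetry of $h$) to control the sum by $C\Phi(t)$. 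Combining with Step 1, $|\dot M(t)|\le C\sqrt{\dot H_s(t)}(1+\sqrt{\Phi(t)})+C\sqrt{\Phi(t)}$.

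\textbf{Step 3 (controlling $\Phi$ and closing the loop).} The integrability assumption \eqref{two-plus-eps} enters here. H\"older with exponents $(2+\vareps)/2$ and $(2+\vareps)/\vareps$ gives $\Phi(t)\le\Psi(t)^{2/(2+\vareps)}$ with $\Psi(t):=\sum_x\|h(\tau_x\omega)\|^{2+\vareps}q(t,x)$. To bound $\Psi(t)$ quenched, split the sum at radius $R\sim\sqrt t$: inside, the Nash $\cL^\infty$ bound $q(t,x)\le Ct^{-d/2}$ combined with Birkhoff's $\Z^d$-ergodic theorem applied to $\|h\|^{2+\vareps}\in\cL^1(\pi)$ (which is legitimate for $\pi$-a.e.\ $\omega$ once $R>R_0(\omega)$) yields a bound by $C(h^*)^{2+\vareps}$; the tail $|x|>R$ is absorbed using $\sum_{|x|>R}q(t,x)\le M(t)/R$, giving a self-bootstrap in terms of the very ratio $M(t)/\sqrt t$ being estimated. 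Integrating $\dot M$ using Cauchy--Schwarz in $s\in[0,t]$ and the bound $\int_0^t\dot H_s\,ds\le H(t)+|\int_0^t\dot H_v\,ds|=dG(t)+(d/2)\log t+O(1)$ (the last piece coming from a parallel analysis of $\dot H_v$ via $h$), together with $\int_0^t\Phi(s)\,ds\le Ct$ (from the quenched ergodic theorem for $\eta(s)$), and solving the resulting implicit inequality in $M(t)/\sqrt t$ produces exactly
\begin{align*}
t^{-1/2}M(t)\le C_4\bigl(G(t)+\vareps^{-1}\bigr)^{(1+\vareps)/(2+\vareps)}.
\end{align*}
The exponent $(1+\vareps)/(2+\vareps)$ emerges from the H\"older step $\Phi\le\Psi^{2/(2+\vareps)}$.

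\textbf{Main obstacle.} The crux is Step 3: the quenched, pointwise-in-$t$ control of $\Phi(t)$. Because $\|h\|_F$ is merely in $\cL^{2+\vareps}$ and not bounded, uniform-in-$t$ estimates are not available from a simple $\cL^\infty$ bound and must be extracted by delicately combining the Nash heat-kernel decay with $\Z^d$-Birkhoff, and absorbing the resulting feedback into a self-bootstrap on $M(t)/\sqrt t$ itself. A secondary difficulty is bounding $|\int_0^t\dot H_v\,ds|$ to relate the symmetric entropy dissipation $H_s(t)$ back to $H(t)$; this requires applying the same stream-tensor manipulation as in Step 2 to the entropy production itself.
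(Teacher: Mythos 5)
Your high-level plan is the right one --- Nash's entropy/entropy-production scheme, the stream-tensor substitution \eqref{vcurl}, and the $\cL^{2+\vareps}$ integrability to absorb the unbounded $|h|$ --- and you correctly flag the two hard points. But your execution differs from the paper's in ways that leave genuine gaps, and the differences are not cosmetic.

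\textbf{The entropy derivative.} You split $\dot H=\dot H_s+\dot H_v$ and then face the ``secondary difficulty'' of relating $\int_0^t\dot H_s$ to $H(t)$ by separately estimating $|\int_0^t\dot H_v|$. The paper never does this split: it derives a lower bound on the \emph{full} $\dot H(t)$ at once (equation \eqref{entropyprod}), absorbing the $v$-contribution into the $s$-contribution term by term using the pointwise bound $|v_k|\le s_k$ (automatic from $p_k\ge0$) together with the vanishing of $\sum_{x,k}v_k(x)\bigl(q(t,x+k)-q(t,x)\bigr)$ by divergence-freeness. This yields $\dot H(t)\ge c_5\sum_{x,k}\bigl|\tfrac{q(t,x+k)-q(t,x)}{q(t,x+k)+q(t,x)}\bigr|^2q(t,x)$ and hence directly $\int_0^t\dot H=H(t)$, with no residual $\dot H_v$ to control. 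Your route re-creates a difficulty the paper designs around.

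\textbf{The pointwise control of $\Phi(t)$ (or $\Psi(t)$) does not close.} You try to bound $\Psi(t)=\sum_x\|h(\tau_x\omega)\|^{2+\vareps}q(t,x)$ quenched and pointwise in $t$ by splitting at $R\sim\sqrt t$, using the heat-kernel bound plus $\Z^d$-Birkhoff inside and ``the tail $|x|>R$ is absorbed using $\sum_{|x|>R}q(t,x)\le M(t)/R$''. That tail estimate is for $\sum q$, but what you need is $\sum_{|x|>R}\|h(\tau_x\omega)\|^{2+\vareps}q(t,x)$, and since $\|h\|$ is only $\cL^{2+\vareps}$ (unbounded) you cannot factor it out and the $M(t)/R$ bound gives nothing. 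The paper avoids this entirely: it first integrates $\dot M$ over $[0,t]$ and then applies a single H\"older step \emph{in the space-time integral} with exponents $2+\vareps$ and $\tfrac{2+\vareps}{1+\vareps}$ against the measure $q(u,x)\,du$. The ``bad'' factor then becomes the Ces\`aro time average $\tfrac1t\int_0^t\expectom{(\,s^*+\sum_l|h_{k,l}(\eta(u))|\,)^{2+\vareps}}du$ of an $\cL^1$ functional of the stationary ergodic environment process $\eta$, which converges $\pi$-a.s.\ by the Hopf/Chacon--Ornstein ergodic theorem --- no heat-kernel radius split, no bootstrap.

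\textbf{The exponent.} Using Cauchy--Schwarz ($\alpha=1/2$) on $\int_0^t\sqrt{\dot H_s\,\Phi}$ gives $\sqrt{\int\dot H_s}\cdot\sqrt{\int\Phi}\lesssim\sqrt{H(t)\,t}\sim\sqrt{t\log t}$, which is already too big: you cannot recover $C_4\sqrt t\,(G(t)+\vareps^{-1})^{(1+\vareps)/(2+\vareps)}$ from this. The paper's H\"older exponents $\bigl(2+\vareps,\tfrac{2+\vareps}{1+\vareps}\bigr)$ followed by a second H\"older against the Fisher-information quantity produce the exact exponent $\alpha=\tfrac{2+\vareps}{2+2\vareps}\in(\tfrac12,1)$ on $\dot H$, and it is this $\alpha$ fed into Nash's $\int_0^t(\dot G+\tfrac{1}{2u})^\alpha du$ integration-by-parts that yields the $(G(t)+\vareps^{-1})^{(1+\vareps)/(2+\vareps)}$ with the $\vareps^{-1}$ coming from $\int_0^t(2u)^{-\alpha}du$. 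Claiming the exponent ``emerges from'' $\Phi\le\Psi^{2/(2+\vareps)}$ is not backed by an arithmetic that closes; it works only if the H\"older in $(u,x)$ is done with the paper's choice of exponents, after time-integration, not before.
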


\smallskip
\noindent
{\bf Remark.}
Letting $\vareps\to0$, $h^*=h^*(\vareps)$  decreases to $\sum_{kl\in\cE}\norm{h_{k,l}}_2<\infty$. Therefore $C_4$ also decreases to a finite positive limit. Nonetheless, the right hand side of \eqref{derivative bound} blows up due to the $\vareps^{-1}$ term. This is the reason of imposing \eqref{two-plus-eps}, with $\vareps>0$.

\begin{proof}
[Proof of Lemma \ref{lem:derivative bound}]
Within this proof we will use the notation 
\begin{align}
\notag
s_k(x):=s_k(\tau_x\omega), 
\qquad
v_k(x):=v_k(\tau_x\omega), 
\qquad
h_{k,l}(x):=h_{k,l}(\tau_x\omega).
\end{align}

In the following computations we use repeatedly Kolmogorov's forward equation 
\begin{align}
\label{kfe}
\dot q(t,x)
=
\frac12
\sum_{x\in\Z^d, k\in\cE}
s_k(x) 
(q(t,x+k)-q(t,x))
+
\frac12
\sum_{x\in\Z^d, k\in\cE}
v_k(x) 
(q(t,x+k)+q(t,x)).
\end{align}
In the last term the divergence-freeness \eqref{divfree} of $v$ is used. 

First we provide a lower bound on $\dot H(t)$:
\begin{align}
\notag
\dot H(t)
=
&
\phantom{-cs^*.}
\frac12 
\sum_{x\in\Z^d, k\in\cE}
s_k(x) 
(q(t,x+k)-q(t,x)) (\log q(t,x+k)-\log q(t,x))
\\
\notag
&
\phantom{cs^*}
-
\frac12 
\sum_{x\in\Z^d, k\in\cE}
v_k(x)
(q(t,x+k)+q(t,x)) (\log q(t,x+k)-\log q(t,x))
\\
\notag
&
\phantom{cs^* \frac12}
+ 
\sum_{x\in\Z^d, k\in\cE}
v_k(x)
(q(t,x+k)-q(t,x))
\\
\notag
=
&
\phantom{-cs^*.}
\frac12 
\sum_{x\in\Z^d, k\in\cE}
s_k(x) (q(t,x+k)-q(t,x)) 
\int_{q(t,x)}^{q(t,x+k)} \frac{1}{u} du
\\
\notag
&
\phantom{cs^*}
-
\frac12 
\sum_{x\in\Z^d, k\in\cE}
v_k(x)
\int_{q(t,x)}^{q(t,x+k)} \frac{q_t(x)+q_t(x+k)-2u}{u} du.
\\
\notag
\ge
&
\phantom{-\frac{1}{2}c.}
s_*
\sum_{x\in\Z^d, k\in\cE}
\int_{q(t,x)\land q(t,x+k)}^{q(t,x)\lor q(t,x+k)} \frac{u-q(t,x)\land q(t,x+k)}{u} du
\\
\label{entropyprod}
\ge
&
\phantom{-\frac{1}{2}.}
c_5
\sum_{x\in\Z^d, k\in\cE}
\abs{\frac{q(t,x+k)-q(t,x)}{q(t,x+k)+q(t,x)}}^2q(t,x).
\end{align}
The first step follows from from \eqref{kfe} by explicit computations, using the symmetries \eqref{symmetries} of $s$ and $v$, and also the divergence-freeness of $v$, \eqref{divfree}. Note, that due to this latter the third sum on the right hand side vanishes. We included it as a dummy.  
The second step is just transcription of differences to appropriate integrals. 
In the third step we have used $s_k\ge s_*\lor \abs{v_k}$. 
Finally, in the last step we have used the bound
\begin{align}
\notag
b
:=
\inf_{1<\beta<\infty}
\frac{\beta+1}{(\beta-1)^2}\int_1^\beta \frac{u-1}{u} du 
=
0.8956\dots
> 0,
\end{align}
and got $c_5=c_5(s_*)=b s_*$.
Note, that the lower bound on entropy production in terms of Fisher-entropy, \eqref{entropyprod}, looks formally the same as in the reversible case. However, deriving it, one has to control the skew-symmetric part by the symmetric part of the entropy production. This can be done due to incompressibility (or sourcelessness, or divergence-freeness) of the flow $v$. 

Next we compute $\dot M(t)$.
\begin{align}
\notag
\dot M(t)
=
&
\phantom{-.}
\frac12 
\sum_{x\in\Z^d, k\in\cE}
s_k(x) 
(\abs{x}-\abs{x+k})(q(t,x+k)-q(t,x)) 
\\
\notag
&
-
\frac12 
\sum_{x\in\Z^d, k\in\cE}
v_k(x)
(\abs{x}+\abs{x+k})(q(t,x+k)-q(t,x)) 
\\
\notag
=
&
\phantom{-.}
\frac12 
\sum_{x\in\Z^d, k\in\cE}
s_k(x) 
(\abs{x}-\abs{x+k})(q(t,x+k)-q(t,x)) 
\\
\notag
&
-
\frac12 
\sum_{x\in\Z^d, k,l\in\cE}
h_{k,l}(x)
(\abs{x+k}-\abs{x+l})(q(t,x+k+l)-q(t,x)) 
\end{align}
The first step follows from \eqref{kfe} by explicit computation, using the symmetries \eqref{symmetries} of $s$ and $v$. The second step follows from \eqref{vcurl} and the symmetries \eqref{htensor}.
Hence, 
\begin{align}
\notag
\abs{\dot M(t)}
\le
&
C_6
\sum_{x\in\Z^d, k\in\cE}
\left(s^* + \sum_{l\in\cE}\abs{h_{k,l}(x)}\right)
\abs{\frac{q(t,x+k)-q(t,x)}{q(t,x+k)+q(t,x)}}q(t,x),
\end{align}
with $C_6=C_6(d)$.

Integrating over $t$ and applying Minkowski's inequality we obtain
\begin{align}
\notag
\abs{M(t)}
\le
&
C_6
t^{\frac{1}{2+\vareps}}
\left(\frac{1}{t} \int_0^t \sum_{x\in\Z^d, k \in\cE} \left(s^*+\sum_{l \in\cE}\abs{h_{k,l}(x)}\right)^{2+\vareps} q(u,x) du\right)^{\frac{1}{2+\vareps}}
\\
\label{absmt}
&
\phantom{t^{\frac{1}{2+\vareps}}}
\times
\left(\int_0^t \sum_{x\in\Z^d, k \in\cE}\abs{\frac{q(u,x+k)-q(u,x)}{q(u,x+k)+q(u,x)}}^{\frac{2+\vareps}{1+\vareps}} q(u,x) du\right)^{\frac{1+\vareps}{2+\vareps}}
\end{align}

Due to the (Hopf-) Chacon-Ornstein ergodic theorem (see \cite{hopf_54}, \cite{chacon-ornstein_60}, \cite{hopf_60} or \cite{krengel_85}) and integrability of $\abs{h_{k,l}}^{2+\vareps}$ the middle factor in \eqref{absmt} converges to a finite deterministic value, for $\pi$-almost all $\omega\in\Omega$, as $t\to\infty$. Indeed, 
\begin{align}
\notag
&
\frac{1}{t} \int_0^t 
\sum_{x\in\Z^d, k \in\cE} 
\left(s^*+\sum_{l \in\cE}\abs{h_{k,l}(x)}\right)^{2+\vareps} 
q(u,x) 
du
=
\\
\label{co}
&\hskip4cm
\frac{1}{t} \int_0^t 
\sum_{k \in\cE} 
\expectom{\left(s^*+\sum_{l \in\cE}\abs{h_{k,l}(\eta(u))}\right)^{2+\vareps}} 
du,  
\end{align}
where $t\mapsto \eta(t)$ is the Markov process of the environment seen by the random walker, defined in \eqref{env proc}, which is stationary and ergodic on $(\Omega, \pi)$. This is the typical context for the (Hopf-) Chacon-Ornstein theorem. The right hand side of \eqref{co} $\pi$-almost-surely converges to 
\begin{align}
\notag
C_7^{2+\vareps}
:=
\sum_{k \in\cE} 
\int_{\Omega} 
\left (s^*+ \sum_{l \in\cE} \abs{h_{k,l}(\omega)}\right)^{2+\vareps} 
d\pi(\omega)
<\infty.
\end{align}
Obviously, $C_7=C_7(d, s^*, h^*)$.
Note that this is the only argument where the stronger integrability condition \eqref{two-plus-eps} is used. 

On the other hand, due to \eqref{entropyprod} and a H\"older bound, the last factor in \eqref{absmt} is dominated by the entropy production. Altogether we obtain that for $\pi$-almost all $\omega\in\Omega$, there exists $t^*(\omega)<\infty$, such that for all $t>t^*(\omega)$.
\begin{align}
\label{altogether}
\abs{M(t)}
\le
&
C_8
t^{\frac{1}{2+\vareps}}
\left(\int_0^t \dot H(u)^{\frac{2+\vareps}{2+2\vareps}} du\right)^{\frac{1+\vareps}{2+\vareps}},
\end{align}
where $C_8=C_8(d, s_*, s^*, h^*):= 2 C_6 C_7 c_5^{-1/2}$.

The rest is straight sailing. Following \cite{nash_58}, with due modifications,  
\begin{align}
\notag
\int_0^t \dot H(u)^{\frac{2+\vareps}{2+2\vareps}} du
&=
\int_0^t \left(\dot G(u) + \frac{1}{2u}\right)^{\frac{2+\vareps}{2+2\vareps}} du
\\
\notag
&=
\int_0^t (2u)^{-\frac{2+\vareps}{2+2\vareps}} \left( 1 +  2u \dot G(u) \right)^{\frac{2+\vareps}{2+2\vareps}} du
\\
\notag
&\le
\int_0^t \left( (2u)^{-\frac{2+\vareps}{2+2\vareps}} + \frac{2+\vareps}{2+2\vareps}  (2u)^\frac{\vareps}{2+2\vareps} \dot G(u) \right) du
\\
\notag
&=
\frac{2+2\vareps}{\vareps} t^{\frac{\vareps} {2+2\vareps}} + \frac{2+\vareps}{2+2\vareps} t^{\frac{\vareps} {2+2\vareps}} G(t) 
-
\frac{\vareps}{2+2\vareps} \int_0^t (2u)^{-\frac{2+\vareps}{2+2\vareps}} G(u) ds
\\
\notag
&\le
3t^{\frac{\vareps} {2+2\vareps}} \left(\vareps^{-1} + G(t)\right).
\end{align}
Inserting this into \eqref{altogether} we obtain \eqref{derivative bound} of Lemma \ref{lem:derivative bound}, with $C_4=3 C_8$. 
\end{proof}

To conclude the proof of Proposition \ref{prop:tightness} note that \eqref{entropy2} and \eqref{derivative bound} jointly imply that there exists a constant $C_9=C_9(\varepsilon, d, s_*, s^*, h^*)$ so that for $\pi$-almost all $\omega\in\Omega$, there exists a $t^*(\omega)$ so that for $t>t^*(\omega)$, $G(t)\le C_9$. Hence follows \eqref{tightness}, via \eqref{derivative bound}.

\qed

\subsection{Some operators over $\cH$ and $\cG$}

First we recall from \cite{kozma-toth_17} some bounded operators acting on the Hilbert spaces $\cH$ and $\cG$. 

Let $(\Omega,\cF, \pi, \tau_z:z\in\Z^d)$ be a probability space with an is ergodic $\Z^d$-action. The gradient, Laplacian and Riesz operators are all directly expressed with the help of the shift operators $U_kf(\omega):= f(\tau_k\omega)$, as follows.

\smallskip
\noindent
$\nabla_k, \Delta, \Gamma_k: \cH\to\cH$: 
\begin{align}
\label{grad-lap-riesz}
\nabla_k:=U_k-I,
&&
\Delta:=2\sum_{k\in\cE}\nabla_k,
&&
\Gamma_k:=\abs{\Delta}^{-1/2}\nabla_k
\end{align}

\smallskip
\noindent
$\nabla, \Gamma: \cH\to\cG$:
\begin{align}
\notag
(\nabla f)_k:=\nabla_k f, 
&&
(\Gamma f)_k:=\Gamma_k f.
\end{align}

\smallskip
\noindent
$\nabla^*, \Gamma^*: \cG\to\cH$:
\begin{align}
\notag
\nabla^* g:=\sum_{k\in\cE}\nabla_{-k}g_k, 
&&
\Gamma^* g:=\sum_{k\in\cE}\Gamma_{-k}g_k, 
\end{align}
The following identities hold, 
\begin{align}
\label{isis}
\nabla^*\nabla = - \Delta
&&
\Gamma^*\Gamma = I_{\cH}, 
&&
\Gamma\Gamma^* = I_{\cG}. 
\end{align}
The first two follow directly from the definitions and straightforward computations. The proof of the third one relies on $\Ker(\nabla^*)=\Ker(\Gamma^*)=\{0_{\cG}\}$. This follows from ergodicity, $\Ker(\Delta)=\{0_{\cH}\}$, and its proof is left as an exercise. The last two identities in \eqref{isis} mean that $\Gamma:\cH\to\cG$ is an \emph{isometric isomorphism}. This fact will have importance below. 

We will also use the multiplication operators $M_k, N_k: \cL^2(\Omega, \pi)\to\cL^2(\Omega, \pi)$, $k\in\cE$ (see (KT38), (KT39)): 
\begin{align}
\label{multiplop}
&
N_kf(\omega):=\left(s_k(\omega)-s_*\right)f(\omega), 
&&
M_kf(\omega):=v_k(\omega)f(\omega), 
\end{align}
and recall the commutation relations (KT40):
\begin{align}
\label{commute}
\begin{aligned}
&
-\sum_{k\in\cE}N_k\nabla_k 
=
-\sum_{k\in\cE}\nabla_{-k}N_k
=
\frac12
\sum_{k\in\cE}\nabla_{-k}N_k\nabla_{k}
=: T=T^*\ge0,
&
\\
&
\phantom{-\,}
\sum_{k\in\cE}M_k\nabla_k
=
- 
\sum_{k\in\cE}\nabla_{-k}M_k, 
=:A=-A^*,
&
\end{aligned}
\end{align}
which follow directly from \eqref{symmetries} and \eqref{divfree}. 

Strictly speaking, the multiplication operators $M_k$ and $N_k$ do not preserve the subspace $\cH\subset\cL^2(\Omega,\pi)$ of zero mean elements. However, they only appear in the combinations $\sum_{k\in\cE}N_k\nabla_k$, respectively, $\sum_{k\in\cE}M_k\nabla_k$, which due to the commutation relations \eqref{commute} do preserve $\cH$.

Also recall the decomposition of the infinitesimal generator $L$ of the environment process $t\mapsto\eta(t)$ into self-adjoint and skew-self-adjoint parts (cf. (KT41)-(KT43)):
\begin{align}
\notag
L
=
\frac12\Delta-T+A
=
-
S+A
.
\end{align}
Note that the (absolute value) of the Laplacian minorizes and majorizes the self-adjoint part of the infinitesimal generator: 
\begin{align}
\label{domin}
s_* \abs{\Delta} \le 2 S \le s^* \abs{\Delta}.
\end{align}
The inequalities are meant in operator sense. The ellipticity condition \eqref{ellipt} is used in the lower bound, and bounded jump rates in the upper bound. 

\subsection{Harmonic coordinates: Proof of Proposition \ref{prop:harmonic_coordinates}}

Since $\Gamma:\cH\to\cG$ is an isometric isomorphism (see \eqref{isis}) we can assume that
\begin{align}
\notag
\theta=\Gamma \chi, 
\end{align}
with some $\chi\in\cH$, and write the equation \eqref{harm1} for $\chi\in\cH$ as follows:
\begin{align}
\label{eqforchi}
\left(\abs{\Delta}^{1/2} + \sum_{k\in\cE} N_k\Gamma_k + \sum_{k\in\cE} M_k\Gamma_k \right) \chi
=
\phi.
\end{align}
This is the equation to be solved for $\chi\in\cH$. 

In order to present the argument in its most transparent form let's first make the simplifying assumption that the symmetric part $s_k$ of the jump rates $p_k$ (see (KT5)) are actually constant, $s_k(\omega)\equiv 1$ $\pi$-a.s.:
\begin{align}
\label{constsymm}
p_k(\omega)= 1 + v_k(\omega). 
\end{align}
This is the case treated in an early arxive version of \cite{kozma-toth_17} available at \cite{kozma-toth_14}. Its advantage is that the relevant ideas appear in their most transparent form, without the formal (but unessential) complications caused by the non-constant symmetric parts. In this case we have (see \eqref{multiplop})
\begin{align}
\notag
N_k=0, \  \text{for all }k\in\cE.
\end{align}
Thus \eqref{eqforchi} reduces to 
\begin{align}
\label{eqforchi_simplified}
\left(\abs{\Delta}^{1/2}  + \sum_{k\in\cE} M_k\Gamma_k\right) \chi
=
\phi.
\end{align}
Since it is assumed that $\phi\in\cH_{-1}$, we can multiply equation \eqref{eqforchi_simplified} from the left by $\abs{\Delta}^{-1/2}$ to get 
\begin{align}
\label{eqforchi_2}
\left(I +  \abs{\Delta}^{-1/2}\sum_{k\in\cE} M_k\Gamma_k \right) \chi
=
\abs{\Delta}^{-1/2}\phi.
\end{align}
On the left hand side of this equation we have exactly the densely defined and closed \emph{unbounded} operator 
\begin{align}
\notag
-B^*
:=
\abs{\Delta}^{-1/2}\sum_{k\in\cE} M_k\Gamma_k
\end{align}
(see (KT58)) which in  Proposition 2 of \cite{kozma-toth_14} is proved to be \emph{skew-self-adjoint} (not merely the adjoint of a skew-symmetric one). Recall that this is the key technical point in the proof of the main result in \cite{kozma-toth_14}. Thus, the spectrum of the operator $B=-B^*$ is on the imaginary axis, and therefore on the left hand side of \eqref{eqforchi_2} $I-B^*=I+B$ is \emph{invertible}, the unique solution of \eqref{eqforchi_simplified} being 
\begin{align}
\notag
\chi = (I+B)^{-1} \left(\abs{\Delta}^{-1/2} \phi\right).
\end{align}
Finally 
\begin{align}
\notag
\theta_k = \Gamma_k \left(I+B\right)^{-1} \left(\abs{\Delta}^{-1/2} \phi\right), 
\ \ \ 
k\in\cE.
\end{align}
These are bona fide elements of $\cH$, since 
\begin{align}
\notag
\abs{\Delta}^{-1/2} \phi \in\cH, 
\qquad
\norm{(I+B)^{-1}} \le 1, 
\qquad
\norm{\Gamma_k}\le1. 
\end{align}

Now we go to the general case, without assuming \eqref{constsymm}. It is proved in Theorem RSC2 of \cite{kozma-toth_17} that due to \eqref{domin} the operator $\abs{\Delta}^{1/2} S^{-1/2}$ is bounded and has a bounded inverse, and the a priori densely defined operator $C:=S^{-1/2} A S^{-1/2}$ is \emph{essentially skew-self-adjoint}. (See the proof of Theorem RSC2 in the Appendix of \cite{kozma-toth_17}.) Recall that this is the key technical point in the proof of the main result of \cite{kozma-toth_17}. Hence it follows that 
\begin{align}
\label{sln}
\chi:=
\left( \abs{\Delta}^{1/2} S^{-1/2} \right) 
\left( I+ C \right)^{-1}
\left( S^{-1/2} \abs{\Delta}^{1/2} \right) 
\abs{\Delta}^{-1/2}\phi
\end{align}
is a bona fide element of $\cH$. Indeed, 
\begin{align}
\notag
\norm{S^{-1/2} \abs{\Delta}^{1/2}}
\!
=
\!
\norm{\abs{\Delta}^{1/2} S^{-1/2}}
<\infty, 
\quad
\norm{\left( I+ C \right)^{-1}}
\le1, 
\quad
\abs{\Delta}^{-1/2}\phi \in\cH. 
\end{align}

It is an easy formal computation to check that $\chi$ in \eqref{sln} provides the solution to the equation \eqref{eqforchi} in the general case, and hence 
\begin{align}
\notag
\theta_k 
&
= 
\Gamma_k
\left( \abs{\Delta}^{1/2} S^{-1/2} \right) 
\left( I+ C \right)^{-1}
\left( S^{-1/2} \abs{\Delta}^{1/2} \right) 
\abs{\Delta}^{-1/2}\phi.
\end{align}

\qed

\subsection{Martingale CLT: Proof of Proposition \ref{prop:martingale clt} and Corollary \ref{cor:martingale clt}}

This follows from the most conventional application of the martingale central limit theorem, see e.g. \cite{hall-heyde_80}, \cite{helland_82}. Due to the choice of $\theta$, for $\pi$-a.a. $\omega\in\Omega$ the quenched process $t\mapsto Y(t)$ defined in \eqref{martingale} is a martingale. Its infinitesimal conditional variance process is 
\begin{align}
\notag
\lim_{h\to 0}h^{-1}\condexpectom{(Y(t+h)-Y(t))^2}{\cF_t}
=
\sigma^2(\eta(t))
\end{align}
where $t\mapsto\eta(t):=\tau_{X(t)}\omega$ is the environment process as seen by the random walk, defined in \eqref{env proc}, and $\sigma^2:\Omega\to \R_+$ is
\begin{align}
\notag
\sigma^2(\omega)
=
\sum_{k\in\cE}
p_k(\omega)\abs{\theta_k(\omega)}^2.
\end{align}
The key observation is that since the Markov process $t\mapsto\eta(t)$ is stationary and ergodic (see section 1.2 of \cite{kozma-toth_17}) the following strong law of large numbers holds:
\begin{align}
\notag
\lim_{t\to\infty}\frac{1}{t}\int_0^t\sigma^2(\eta(s))ds
=
\int_\Omega \sigma^2(\omega) d\pi(\omega)
=:\bar\sigma^2, 
\qquad
\pi\text{-a.s.}
\end{align}
Positivity  of the variance $\bar\sigma^2$ follows from the the (skew)symmetry of $v$ in the second line of \eqref{symmetries} and the ellipticity condition \eqref{ellipt}. Indeed, from these relations it follows that
\begin{align}
\notag
\bar\sigma^2
=
\sum_{k\in\cE}
\int_\Omega
p_k(\omega) \abs{\theta_k(\omega)}^2
d\pi(\omega)
=
\sum_{k\in\cE}
\int_\Omega
s_k(\omega) \abs{\theta_k(\omega)}^2
d\pi(\omega)
\ge 
s_* 
\sum_{k\in\cE}
\int_\Omega
\abs{\theta_k(\omega)}^2 
d\pi(\omega)
>0. 
\end{align}
In the middle equality the symmetries \eqref{symmetries} are used. This concludes the proof of Proposition \ref{prop:martingale clt}. 

Corollary \ref{cor:martingale clt} follows directly. We apply the standard martingale decomposition (see (KT25)) \emph{and} Proposition \ref{prop:martingale clt}:
\begin{align}
\notag
Y^*(t)
=
\left(
X(t) - \int_0^t\phi^*(\tau_{X(s)}\omega)\, ds)
\right)
+
\left(
\int_0^t\phi^*(\tau_{X(s)}\omega)\, ds
-
\Theta^*(\omega, X(t))
\right),
\end{align}
and note that the martingale CLT applies. The expression \eqref{displ covariance} of the asymptotic covariance matrix follows as above. 

\qed

\subsection{Asymptotically vanishing corrector: Proof of Proposition \ref{prop:small error}}

We write (like in (KT74))
\begin{align}
\notag
\probabom{ \abs{\Psi(X(t))}>\delta \sqrt{t}}
&
\le
\probabom{ \{\abs{\Psi(X(t))}>\delta\sqrt{t}\} \cap \{\abs{X(t)}\le K\sqrt{t}\}}
+
\probabom{ \abs{X(t)}> K\sqrt{t} }
\\
\notag
&
\le
\delta^{-1}t^{-1/2}
\expectom{\abs{\Psi(X(t))} \ind{\abs{X(t)}\le K\sqrt{t}}}
+
K^{-1}t^{-1/2}
\expectom{\abs{X(t)}}.
\end{align}
Using the diagonal heat kernel upper bound \eqref{nassh} in the first term and the moment bound \eqref{tightness} in the second term from here we readily obtain 
\begin{align}
\label{tbc}
\varlimsup_{t\to\infty}
\probabom{ \abs{\Psi(X(t))}>\delta \sqrt{t}}
\le
C \delta^{-1} \varlimsup_{t\to\infty}
t^{-(1+d)/2} \sum_{\abs{x}\le K \sqrt{t}} \abs{\Psi(x)}
+
M^*K^{-1}.
\end{align}
The statement of Proposition \ref{prop:small error}, \eqref{key-error}, will follow from the following strong law of large numbers: 

\begin{lemma}
\label{lem:lln}
Let $(\Omega, \cF, \pi, \tau_z:z\in\Z^d)$ be a probability space with an ergodic $\Z^d$-action and $\Omega\times\Z^d\ni x\mapsto \Psi(\omega,x)\in\R$ be a zero-mean $\cL^2$-cocycle. Then
\begin{align}
\label{erglln}
\lim_{N\to\infty} N^{-(d+1)} \sum_{\abs{x}\le N} \abs{\Psi(x)}
=0, 
\qquad
\pi\text{-a.s.}
\end{align}
\end{lemma}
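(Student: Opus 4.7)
The plan is to decompose $\Psi$ along a staircase path from $0$ to $x$, reducing the Cesàro average to sums involving only one-dimensional cocycles and spatially averaged translates. Via the cocycle identity,
\[
\Psi(\omega, x) = \sum_{i=1}^d \Psi(\tau_{y_{i-1}(x)}\omega, x_i e_i), \qquad y_{i-1}(x) := \sum_{j<i} x_j e_j,
\]
so (up to combinatorial factors) $N^{-(d+1)}\sum_{|x|\le N}|\Psi(\omega,x)|$ is bounded above by a finite linear combination of terms of the form
\[
T_i^{(N)}(\omega) := \frac{1}{|B_N^{(i-1)}|}\sum_{y\in B_N^{(i-1)}}\frac{g_N^{(i)}(\tau_y\omega)}{N^2}, \qquad g_N^{(i)}(\omega) := \!\!\sum_{|n|\le N}\!\!|\Psi(\omega, n e_i)|,
\]
where $B_N^{(i-1)}=\{y\in\Z^{i-1}: |y|\le N\}$. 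The goal reduces to showing $T_i^{(N)}(\omega)\to 0$ $\pi$-a.s.\ for each $i=1,\dots,d$.

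First, I establish the one-dimensional Cesàro statement $g_N^{(i)}(\omega)/N^2\to 0$ $\pi$-a.s. Birkhoff's ergodic theorem applied to the $e_i$-shift gives $n^{-1}\Psi(\omega, ne_i)\to\condexpect{\Psi(\cdot,e_i)}{\cI_{e_i}}$ $\pi$-a.s., where $\cI_{e_i}\subset\cF$ is the $e_i$-invariant $\sigma$-algebra. The key step is to show this conditional expectation vanishes $\pi$-almost surely. The cocycle identity yields
\[
\Psi(\tau_w\omega, ne_i) - \Psi(\omega, ne_i) = \Psi(\tau_{ne_i}\omega, w) - \Psi(\omega, w) \qquad (w\in\Z^d),
\]
and the right-hand side is $o(n)$ $\pi$-a.s.\ for each fixed $w$ by Chebyshev and Borel--Cantelli using $\norm{\Psi(\cdot,w)}_{\cL^2}<\infty$ and the stationarity of the $v$-shift. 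Dividing by $n$ and passing to the limit, $\condexpect{\Psi(\cdot,e_i)}{\cI_{e_i}}$ is $\tau_w$-invariant for every $w\in\Z^d$ on a full-measure event (countable intersection), hence $\Z^d$-invariant $\pi$-a.s., hence constant by ergodicity, and equal to $\expect{\Psi(\cdot,e_i)}=0$. The Cesàro refinement $g_N^{(i)}/N^2\to 0$ follows.

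Second, I transfer this $\pi$-a.s.\ convergence through the spatial averaging to conclude that $T_i^{(N)}\to 0$. A telescoping bound yields $g_N^{(i)}(\omega)/N^2\le 4M(\omega)$ uniformly in $N$, where $M$ is the maximal average of $|\theta_{e_i}|$ along the $e_i$-shift, which lies in $\cL^2(\pi)$ by the $\cL^p$ maximal ergodic inequality. Given $\eta>0$, Egorov's theorem furnishes $K(\eta)$ and a set $A_\eta$ with $\pi(A_\eta^c)<\eta$ on which $g_N^{(i)}/N^2<\eta$ for all $N\ge K(\eta)$. Splitting $T_i^{(N)}$ according to $\{\tau_y\omega\in A_\eta\}$ and its complement and applying the multiparameter Wiener ergodic theorem to the fixed function $M\,\one_{A_\eta^c}$ gives $\limsup_N T_i^{(N)}(\omega) \le \eta + 4\norm{M}_{\cL^2}\sqrt{\eta}$ $\pi$-a.s.; letting $\eta\to 0$ along a countable sequence yields $T_i^{(N)}\to 0$ $\pi$-a.s., completing the proof.

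The main obstacle is precisely this last transfer step: the variable-integrand nature of $g_N^{(i)}/N^2$ prevents a direct application of the multiparameter Wiener ergodic theorem, and the usual quantitative techniques (dyadic scales, concentration inequalities) are insufficient with only $\cL^2$ control. The dominated-convergence-style combination of Egorov with the $\cL^2$-maximal function circumvents this difficulty and realizes the \emph{softer than usual, merely ergodic} strategy announced in the introduction.
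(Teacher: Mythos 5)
Your argument is correct in spirit but follows a genuinely different route from the paper. The paper proves the lemma by \emph{induction on the dimension}: fixing a mesoscopic scale $L$, it splits the new coordinate $m$ into residue $l\in[0,L-1]$ and block index $j$, which produces three terms handled respectively by the induction hypothesis, the multidimensional pointwise ergodic theorem, and the multidimensional \emph{unrestricted} pointwise ergodic theorem (Krengel, Theorem~6.1.2); the bound is then closed by letting $L\to\infty$ via the mean ergodic theorem. You instead use a one-shot staircase decomposition of the cocycle into $d$ one-dimensional pieces, reduce to the one-dimensional Cesàro statement $g_N^{(i)}/N^2\to 0$ $\pi$-a.s.\ (which you prove cleanly via Birkhoff plus a cocycle/$\Z^d$-ergodicity argument showing the $e_i$-conditional drift vanishes), and then transfer through the spatial average by combining the strong $\cL^p$ ($p>1$) maximal ergodic inequality, Egorov's theorem, and the multiparameter Wiener theorem. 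What the paper's route buys is that it sidesteps maximal functions and Egorov entirely, trading them for the unrestricted ergodic theorem and a two-parameter ($N$, $L$) limiting scheme; what your route buys is that it avoids the induction and makes the mechanism for taming the $N$-dependent integrand completely explicit. Both qualify as the ``soft, merely ergodic'' strategy the paper advertises.

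One small technical point to tighten: in your final transfer step the averages $|B_N^{(i-1)}|^{-1}\sum_{y\in B_N^{(i-1)}} M(\tau_y\omega)\one_{A_\eta^c}(\tau_y\omega)$ run over a \emph{sub}-action $\Z^{i-1}\subset\Z^d$, which need not be ergodic even though the full $\Z^d$-action is. So the Wiener theorem gives a.s.\ convergence to the \emph{conditional} expectation $\condexpect{M\one_{A_\eta^c}}{\cI_{\Z^{i-1}}}$, not to the scalar $\expect{M\one_{A_\eta^c}}\le\norm{M}_2\sqrt{\eta}$ as stated. This is easily repaired: choose the exceptional sets nested so that $\one_{A_\eta^c}\downarrow 0$ a.s., and apply conditional dominated convergence (with dominating function $M\in\cL^2\subset\cL^1$) to conclude $\condexpect{M\one_{A_\eta^c}}{\cI_{\Z^{i-1}}}\downarrow 0$ $\pi$-a.s.\ as $\eta\to 0$. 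With that adjustment the $\limsup_N T_i^{(N)}\to 0$ conclusion stands.
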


\noindent
{\bf Remarks on Lemma \ref{lem:lln}:}
\begin{enumerate}[$\circ$]

\item
The statement Lemma \ref{lem:lln} holds true actually for zero-mean $\cL^p$-cocycles, with $p>1$. However, here we only need the $\cL^2$ version. 

\item
The weaker statement 
\begin{align}
\label{werglln}
\lim_{N\to\infty} N^{-d} \sum_{\abs{x}\le N} \ind{\abs{\Psi(x)}>\vareps N}
=0, 
\qquad
\pi\text{-a.s.}, \quad \forall \vareps>0, 
\end{align}
readily follows from \eqref{erglln} by Markov's inequality. 

\item
Various versions of \eqref{erglln} or \eqref{werglln} appear as key ingredient in all proofs of quenched CLT for random walks among random conductances. As examples (in chronological order) see (0.13) (1.23) in \cite{sidoravicius-sznitman_04}; (5.15) in \cite{berger-biskup_07}; (2.15) and (5.25) in \cite{biskup-prescott_07}; (7.17) in \cite{kumagai_14}; (12) in \cite{anders-deuschel-slowik_15}; (4.1) in \cite{biskup-rodriguez_17}. (The list is certainly not exhaustive.) However, it seems to be the case that in all these works heavier tools had been used than the merely ergodic arguments employed in the proof below. This is our reason to include it here. 

\end{enumerate}

\begin{proof}
[Proof of Lemma \ref{lem:lln}.]
We will prove the lemma by induction on the dimension $d$ and for the sequence of cubic boxes $[0,N-1]^d$ rather than $[-N,N]^d$. For $d=1$ the statement of the Lemma is a direct consequence of Birkhoff's ergodic theorem. We will use the notation $(\un,m)\in[0,N-1]^{d}\times [0,N-1]$. Fix $L<\infty$  and write
\begin{align}
\label{threeterms}
&
\sum_{\un\in[0,N-1]^{d}} \sum_{m\in[0,N-1]} \abs{\Psi(\un,m)}
\le
\sum_{\un\in[0,N-1]^{d}} \sum_{l=0}^{L-1} \sum_{j=0}^{\lfloor(N-1)/L\rfloor} \abs{\Psi(\un, l+jL)}
\\
\notag
&\hskip2cm
\le
N \sum_{\un\in[0,N-1]^{d}}\abs{\Psi(\un, 0)}
+ 
\frac{N}{L} \sum_{\un\in[0,N-1]^{d}} \sum_{l=0}^{L-1} \abs{\Psi(\un, l)-\Psi(\un, 0)} 
\\
\notag
&\hskip3cm
+
\sum_{\un\in[0,N-1]^{d}} \sum_{l=0}^{L-1} \sum_{j=1}^{\lfloor(N-1)/L\rfloor} \sum_{i=0}^{j-1}\abs{\Psi(\un, l+(i+1)L)- \Psi(\un, l + iL)}.
\end{align} 
By the induction hypothesis, for the first term we get: 
\begin{align}
\notag
\lim_{N\to\infty}
N^{-(d+2)}N \sum_{\un\in[0,N-1]^{d}}\abs{\Psi(\un, 0)}
=
\lim_{N\to\infty}
N^{-(d+1)} \sum_{\un\in[0,N-1]^{d}}\abs{\Psi(\un, 0)} =0.
\end{align}
For the second term we apply directly the multidimensional version of the almost sure ergodic theorem: 
\begin{align}
\notag
\lim_{N\to\infty}
&
N^{-(d+2)}\frac{N}{L} \sum_{\un\in[0,N-1]^{d}} \sum_{l=0}^{L-1} \abs{\Psi(\un, l)-\Psi(\un, 0)} 
\\
\notag
&
\hskip3cm
=
L^{-1}\sum_{l=0}^{L-1}  
\lim_{N\to\infty}
N^{-d-1}
\sum_{\un\in[0,N-1]^{d}} 
\abs{\Psi(\un, l)-\Psi(\un, 0)} 
=0.
\end{align}
Finally, we turn to the third term on the right hand side of \eqref{threeterms}. 
\begin{align}
\notag
\varlimsup_{N\to\infty}
&
N^{-(d+2)}\sum_{\un\in[0,N-1]^{d}} \sum_{l=0}^{L-1} \sum_{j=1}^{\lfloor(N-1)/L\rfloor} \sum_{i=0}^{j-1}\abs{\Psi(\un, l+(i+1)L)- \Psi(\un, l + iL)}
\\
\notag
&
\le
\frac{1}{L}\sum_{l=0}^{L-1} 
\lim_{N\to\infty}
\frac{L^2}{N^2}\sum_{j=1}^{\lfloor(N-1)/L\rfloor} j
\frac{1}{N^{d}j} \sum_{\un\in[0,N-1]^{d}} \sum_{i=0}^{j-1} \frac{\abs{\Psi(\un, l+(i+1)L)- \Psi(\un, l + iL)}}{L}
\\
\notag
&
=
L^{-1}\expect{\abs{\Psi(\underline 0, L)- \Psi(\underline 0, 0)}}.
\end{align}
In the second step we have applied the multidimensional \emph{unrestricted} almost sure ergodic theorem, see Theorem 6.1.2 of \cite{krengel_85}. 

Finally, letting $L\to\infty$, by the multidimensional version of the mean ergodic theorem we obtain \eqref{erglln} in dimension $d+1$.
\end{proof}
Going now back to \eqref{tbc}, first applying \eqref{erglln} and then letting $K\to\infty$ we obtain \eqref{key-error}.

\qed

\section*{Acknowledgements}

Thanks are due to Marek Biskup and Takashi Kumagai for insisting on the question of extending the result of \cite{kozma-toth_17} to quenched setup and for their helpful remarks on the context of Lemma 1. I also thank Gady Kozma's illuminating comments. 
\\
This work was supported by EPSRC (UK) Fellowship EP/P003656/1, by The Leverhulme Trust (UK) through the International Network Laplacians, Random Walks,
Quantum Spin Systems and by OTKA (HU) K-109684.

\vskip2cm

\hbox{
\hskip9cm
\vbox{\hsize=7cm\noindent
{\sc B\'alint T\'oth}
\\
School of Mathematics
\\
University of Bristol
\\
Bristol, BS8 1TW
\\
United Kingdom
\\
email: {\tt balint.toth@bristol.ac.uk}
}
}

\end{document}